\def\NN{{\bbN}}
\def\ZZ{{\bbZ}}
\def\sat{{\rm sat}}
\def\.{{,\dots,}}
\def\cO{{\calO}}
\def\cI{{\calI}}
\def\cJ{{\calJ}}
\begin{document}

\author{Michael Temkin}
\title[Height reduction for local uniformization]{Height reduction for local uniformization of varieties and non-archimedean spaces}

\thanks{This work was supported by ERC Consolidator Grant 770922 - BirNonArchGeom. I wish to thank the anonymous referee for many useful comments.}

\address{Einstein Institute of Mathematics\\
               The Hebrew University of Jerusalem\\
                Edmond J. Safra Campus, Giv'at Ram, Jerusalem, 91904, Israel}
\email{michael.temkin@mail.huji.ac.il}

\keywords{Local uniformization, valuation rings, local desingularization, Berkovich analytic spaces.}
\begin{abstract}
It is known since the works of Zariski that the essential difficulty in the local uniformization problem is met already in the case of valuations of height one. In this paper we prove that local uniformization of schemes and non-archimedean analytic spaces rigorously follows from the case of valuations of height one. For non-archimedean spaces this result reduces the problem to studying local structure of smooth Berkovich spaces.
\end{abstract}

\maketitle

\section{Introduction}

\subsection{History and motivation}

\subsubsection{Global conjectures}
A classical conjecture asserts that any function field $K$ over a perfect ground field $k$ possesses a smooth projective model $X$. Later it was strengthened to the desingularization conjecture that any model $X$ possesses a modification $f\:X'\to X$ with a $k$-smooth $X'$, in particular, the family of smooth models is cofinal. Even more generally, modern conjectures expect that any integral qe scheme $X$ possesses a blowing up $X'$ which is regular. Here we use qe instead of quasi-excellent and refer to \cite[\S2.3]{absolutedesing} for definition and basic properties. Some other strengthenings consist in requiring that $f$ is projective or even a blowing up, a divisor $D\subset X$ is also resolved, a canonical resolution is constructed, etc.

There is a very similar set of conjectures for schemes over a valuation ring $R$. The classical semistable reduction conjecture asserts that if $R$ is discretely valued, then up to a finite separable ground field extension any smooth proper variety over $k=\Frac(R)$ possesses a semistable proper model over $R$. Again, a stronger semistable modification conjecture asserts that up to a finite separable ground field extension any model can be blown up to a semistable one, and one can consider more general classes of ground rings, at least the class of all valuation rings, at cost of replacing semistability by log smoothness. In addition, and this is the main goal of this paper, one can study the analytic case, when $k$ is real valued complete and one looks for nice formal models of smooth $k$-analytic spaces. The same conjectures apply also in this case. Since they are rather folklore and hard to find in the literature we provide a precise formulation in Conjecture~\ref{logsmoothconj} followed by a discussion of what is known so far.

\subsubsection{Local conjectures}
Historically, the first relatively general approach to resolution of singularities was due to Zariski and it works from local to global. Zariski understood that a natural localization of the problem is not on a variety (or a scheme) $X$ we want to resolve but on the modifications of $X$ we work with in the resolution process. In other words, one works with the topology generated by modifications $X_i\to X$ and Zariski covers of $X_i$ and it is equivalent to the topology of the associated space $\RZ(X)=\lim_i X_i$ called the Riemann space by Zariski and now called Riemann-Zariski or Zariski-Riemann space. Zariski showed that the points of $\RZ(X)$ are valuations of $k(X)$ centered on $X$ and suggested first to resolve varieties along valuations, and then patch the local solutions after an additional blowing up. The two main results of Zariski were as follows: in characteristic zero resolution of varieties along valuations is possible, see \cite{Zar}, for threefolds this implies global resolution, see \cite{Zariski}. The famous local uniformization conjecture asserts that Zariski's theorem also holds in positive characteristic. Again, there are various natural stronger versions, including what we call the {\em log uniformization conjecture}. It applies to qe schemes, uses blowings up, deals with divisors and controls the exceptional locus, see Conjecture~\ref{logunifconj} and the subsequent discussion. The case of $\dim(X)=3$ is the deepest case established so far, and global resolution of qe threefolds was deduced from it -- both works are due to Cossart-Piltant, see \cite{CP}.

The situation with semistable/log smooth modification conjectures is similar. Their local versions conjecture that such a modification exists locally along a semivaluation on the analytic space $X$. For example, such a semivaluation can be viewed as a point on the associated adic space $X^\ad$. We refer to Conjectures~\ref{locunifconj} and \ref{locunifcor} for precise formulations and a detailed discussion. Again, the case when the residue field $\tilk$ is of characteristic zero is known (though the non-discrete case is very recent), while in general we only know this in the lowest possible dimension -- in the case of curves. Moreover, similarly to Zariski's approach one can first establish local uniformization and then deduce global modification results; this is the main strategy of \cite{temst}.  In view of the analogy with the case of schemes it is natural to expect that the two-dimensional case should be within reach (though very difficult) of the concurrent methods, and it should imply global log smooth modification for surfaces. The author plans to pursue this direction in future works, and this paper will serve as a starting point. Our main goal here is to reduce local uniformization of semivaluations on $k$-analytic spaces to the case of usual analytic points (rather than adic ones), see Theorem~\ref{mainth}. The local situation at such points will be studied in further works by analytic methods.

\subsection{Main results}

\subsubsection{Induction on height}
It is known since the works of Zariski that the main case of local uniformization is that of valuations of height one. Deducing the general case is usually easy, but details depend on the method, so this principle was never formulated rigorously. Our first main result proves that, indeed, local (log) uniformization of valuations on schemes follows from the case of height one, see Theorem~\ref{unifschemes}. Induction on height is a standard method in geometry of valuations, but in this case it is not so simple and some machinery is needed. In particular, we heavily exploit various properties of blowings up.

Our second main result is Theorem~\ref{mainth} which reduces local uniformization of adic points on analytic spaces to the resolution of two problems: (a) local uniformization of analytic points, (b) local log uniformization over the residue field. In fact, any adic valuation is composed from an analytic valuation and a valuation on its residue field, and our proof combines local uniformization of both to get a local uniformization of the adic point. The argument itself is similar to the proof of Theorem~\ref{unifschemes} but more technical.

\subsubsection{Overview of the paper}
Section 2 is devoted to local log uniformization of schemes. We formulate the conjecture and reduce it to the case of height one. In Section 3 we study local uniformization on $k$-analytic spaces, which is formulated in \S3.1. Our argument uses certain log smooth but not semistable formal schemes, so we decided to use this occasion to study few basic properties of log smoothness for admissible formal $\kcirc$-schemes and formulate the general log smooth modification conjecture. The main result of the paper is then proved in \S3.4.

\subsubsection{The method}
Finally, we outline the proof of Theorem~\ref{unifschemes} in the case without divisors. This is enough to illustrate the main ideas. The starting observation is that a valuation of height at least two is composed from valuations of smaller heights which are uniformizable by induction. This allows us to easily reduce the claim to the case when a valuation $\lam$ on $X$ is composed from a valuation $\lam_0$ on $X$ with center $\eta$ and a valuation $\lam_1$ on the Zariski closure $Y=\oeta$ with center $x$, and we have that $X$ is regular at $\eta$ and $Y$ is regular at $x$. By no means this suffices to achieve our goal of desingularizing $X$ at $x$, but we can keep all these data and refine $X$ as follows. Let $E=V(b)$ be a divisor such that $X$ is smooth along $Y\setminus E$ and let $X_l$ be the blowing up of $X$ along $\cJ_l=(\cI_Y,b^l)$. One can show that $X_l\to X$ do not change $Y$ much -- the sequence of strict transforms $Y_l$ stabilizes, and if we make a simple additional assumption as in Lemma~\ref{keylem}(i), we even have that $Y_l=Y$. The main idea is that when $l$ tends to infinity, the completion of $X_l$ along $Y_l\cap E_l$ can be viewed as a tubular neighborhood of the completion of $Y_l$ along $Y_l\cap E_l$. In particular, it should only depend on $Y$ and $Y\cap E$ and instead of $X$ one could start with any regular scheme $X'$ containing $Y$ whose dimension equals the dimension of $X$. In this model case it is easy to see that each $X_l$ is regular. This expectation is worked out in key lemma~\ref{keylem} by a relatively straightforward computation of blowings up and charts. Note also that a similar idea of computing blowing up along $\cI+\cJ^l$ with a large enough $l$ was used in \cite[\S4]{temdes} to solve another desingularization problem.

The argument in the formal case is similar, and the main technical difference is that a model $\gtX_l$ obtained by blowing up a semistable $\gtX$ can be only log smooth rather than semistable, see Lemma~\ref{toriclem}.

\subsection{Conventiones}\label{conv}
Given an ideal $\cI$ on a scheme $X$ we will often use the notation $V_X(\cI)=\Spec_X(\cO_X/\cI)$ to denote its vanishing locus. If $\cI$ is generated by $t_1\.t_n\in\Gamma(\cO_X)$ then we will also use the notation $V_X(t_1\.t_n)$. The scheme $X$ can sometimes be omitted if no confusion is possible.

Blowing up of a scheme $X$ along a closed subscheme $V=V(\cI)$ will be denoted $\Bl_V(X)=\Bl_\cI(X)=\Proj_X(\oplus_{d=0}^\infty\cI^n)$. Recall that $\Bl_V(X)\to X$ is the universal morphism such that $\calI$ pullbacks to an invertible ideal. If $X=\Spec(A)$ is affine and $I\subseteq A$ is an ideal, then any $f\in I$ defines an open subscheme $X'_f=\Spec(A[\frac{I}{f}])$ of $X'=\Bl_I(X)$, where $A[\frac{I}{f}]$ denotes the $A$-subalgebra of $A_f$ generated by the elements $\frac{g}{f}$ with $g\in I$. We call $X'_f$ the {\em $f$-chart of the blowing up}. Note that $X'_f\to X$ is the universal morphism such that $I$ pulls back to the principal ideal generated by $f$, and charts $X'_{f_1}\.X'_{f_n}$ cover $X'$ if and only if $I=(f_1\.f_n)$.

\section{Local uniformization for schemes}

\subsection{Local uniformization conjectures}

\subsubsection{Valuations on integral schemes}
Let $X$ be a scheme. By a {\em semivaluation on $X$} we mean a valuation ring $R$ and a morphism $\lam\:S=\Spec(R)\to X$, and its {\em center} is the image of the closed point of $S$. Furthermore, $\lam$ is a {\em valuation} if it is dominant, in particular, $X$ is integral and one obtains an embedding $k(X)\into\Frac(R)$. In fact, we will only consider valuations in this paper. Any valuation factors uniquely through a valuation $\lam_0\:S_0\to X$ inducing an isomorphism of generic points -- just take $S_0=\Spec(R_0)$ with $R_0=k(X)\cap R$. In all our arguments below one can safely replace $\lam$ by $\lam_0$, which is often called a valuation of $k(X)$ centered on $X$ (it is determined by the valuation ring $R_0$ of $k(X)$ and the center $x=\lam(s)$, and if $X$ is separated -- only by $R_0$). We will freely use that valuations uniquely lift to any proper and birational $X$-scheme $X'$ by the valuative criterion of properness.

\subsubsection{Noetherian case}
In the sequel we will run induction on height of valuations, so the following simple result will be useful.

\begin{lem}\label{finheight}
If $\lam\:\Spec(R)\to X$ is a valuation on a noetherian scheme $X$ such that $\Frac(R)=k(X)$, then the valuation ring $R$ is of finite height $h$. In fact, $h\le d$, where $d=\dim(\calO_{X,x})$ and $x$ is the center of $\lam$.
\end{lem}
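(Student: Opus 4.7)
The plan is to reduce to a classical local-algebra statement. Following the remark preceding the lemma, I would first replace $\lam$ by its factorization $\lam_0\colon \Spec R_0 \to X$ with $R_0 = R \cap k(X)$, which does not change the center and gives $\Frac(R) = k(X)$. Then localizing $X$ at $x$ reduces the problem to the following local question: for $A = \calO_{X,x}$ a noetherian local domain with fraction field $K$, and $R$ a valuation ring of $K$ dominating $A$, show $\mathrm{ht}(R) \le \dim A$.

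This last statement is the classical Abhyankar--Zariski inequality, whose sharp form reads
\[
\mathrm{ht}(R) + \mathrm{trdeg}_{A/\mathfrak{m}_A}(R/\mathfrak{m}_R) \le \dim A.
\]
In particular $h \le d$, and $h$ is automatically finite. I would invoke this inequality directly, as it is standard in the theory of valuations of noetherian local rings.

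For a self-contained induction on $d$, I would let $\mathfrak{p}$ be the smallest nonzero prime of $R$, with corresponding convex subgroup $\Delta \subsetneq \Gamma$ of the value group, and set $\mathfrak{q} = \mathfrak{p} \cap A$. The crucial step is to verify $\mathfrak{q} \ne 0$: if it were zero, then every nonzero $a \in A$ would lie outside $\mathfrak{p}$, so $v(a) \in \Delta$ (as $v(a) \ge 0$ and the complement of $\Delta$ in $\Gamma_{\ge 0}$ consists of elements $> \Delta$), forcing $\Gamma = v(K^*) = \langle v(A \setminus 0) \rangle \subseteq \Delta$ and contradicting $\Delta \subsetneq \Gamma$. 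Hence $\dim(A/\mathfrak{q}) \le d - 1$, and one passes to the induction on $\bar A = A/\mathfrak{q}$ together with the quotient valuation. The hard part of such a direct approach is that the residue field of $R_\mathfrak{p}$ can be transcendental over $\Frac(A/\mathfrak{q})$, so the naive inductive bound on the height of $R/\mathfrak{p}$ may fail; this is exactly what the transcendence-degree term in Abhyankar--Zariski accounts for.
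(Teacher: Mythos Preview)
Your proposal is correct: after passing to $R_0=R\cap k(X)$ and localizing at $x$, the classical Abhyankar inequality for a valuation of $\Frac(A)$ dominating a noetherian local domain $A$ gives $h\le d$ at once (indeed the sharper bound with rational rank in place of height, from which your stated form follows).

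This is, however, a genuinely different route from the paper's. The paper does not invoke Abhyankar--Zariski; instead it realizes $R$ as the filtered colimit $\colim_\alpha\calO_{X_\alpha,x_\alpha}$ over all modifications $X_\alpha\to X$ (the Riemann--Zariski picture), observes via EGA~IV approximation that $|\Spec(R)|$ is the inverse limit of the noetherian spaces $|\Spec(\calO_{X_\alpha,x_\alpha})|$, each of dimension at most $d$, and concludes that any specialization chain in the limit has length at most $d+1$. Your approach is shorter and yields the transcendence-degree term for free, but treats Abhyankar's inequality as a black box; the paper's argument stays inside the modification framework that pervades the rest of the paper and ties the bound directly to the Riemann--Zariski space. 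Your second paragraph correctly identifies that a naive induction on $d$ will not close without exactly the transcendence-degree bookkeeping --- which is the content of Abhyankar's inequality --- so it is really a gloss on why one must cite that result rather than an alternative proof.
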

\begin{proof}
Let $\{X_\alp\}$ be the family of all modifications of $X$ and let $x_\alp$ be the center of the lift of $\lam$ to $X_\alp$. Then $R=\colim_\alp\calO_{X_\alp,x_\alp}$ and hence the approximation theory from \cite[Ch. IV, \S8.8]{ega} applies to $S=\Spec(R)=\lim_\alp\Spec(\calO_{X_\alp,x_\alp})$. In particular, the underlying topological space $|S|$ is the limit of the noetherian topological spaces whose dimension is bounded by $d$. Since $|S|$ is a chain of length $h+1$ with respect to specialization, it immediately follows that its length is bounded by the maximal length of specializing chains in $\Spec(\calO_{X_\alp,x_\alp})$, which is $d+1$.
\end{proof}

\subsubsection{Regular pairs}
Let $X$ be a scheme and $D\subseteq X$ a closed subscheme. We say that $(X,D)$ is a {\em regular pair at} a point $x\in X$ if $X$ is regular at $x$ and $D$ is an snc divisor at $x$. This terminology is not standard, especially because we exclude non-strict normal crossings, but this will be convenient. Also, if $D$ is a closed subset, then we automatically view it also as a reduced closed subscheme.

\subsubsection{Resolution along a valuation}
Since Zariski it is well understood that resolution along a valuation is the local part of the global desingularization problem. Classically, the former is called local uniformization of valuations, but we prefer to change the terminology slightly.

\begin{defin}
Let $X$ be an integral scheme and $\lam\:S\to X$ a valuation on it.

(i) By a {\em desingularization of $X$ along $\lam$} we mean a blowing up $X'=\Bl_V(X)\to X$ such that $X'$ is regular at the center $x'$ of the lift $\lam'\:S\to X'$ of $\lam$.

(ii) Let $D\subsetneq X$ be a closed subset. By a {\em log desingularization of $(X,D)$ along $\lam$} we mean a blowing up $f\:X'=\Bl_V(X)\to X$ such that the pair $(X',D'=f^{-1}(D\cup V))$ is regular at the center $x'$ of the lift $\lam'\:S\to X'$ of $\lam$.
\end{defin}

\begin{rem}
In global resolution, it suffices for many applications to only achieve that the pair $(X',f^{-1}(D))$ is regular. However, there are enough applications in which one needs the stronger version, when the exceptional divisor is added to the new boundary (or log structure) and the whole $f^{-1}(D\cup V)$ is snc. Also, such a control on the exceptional divisor is important for induction hypothesis in constructions of some desingularization methods. In this paper, as well, using the strong version of log desingularization along a valuation will turn out to be critical for local uniformization of non-archimedean spaces.
\end{rem}

\subsubsection{Local uniformizability}
Now we can introduce the central notion of this paper:

\begin{defin}\label{locunifdef}
Let $X$ be an integral scheme and $\lam\:S\to X$ a valuation.
\begin{itemize}
\item[($1$)] The valuation $\lam$ is {\em uniformizable} if there exists a cofinal family of blowings up $f_i\:X_i\to X$ such that each $f_i$ is a desingularization of $X$ along $\lam$.

\item[($2$)] The valuation $\lam$ is {\em log uniformizable} if for any closed set $D\subsetneq X$ there exists a cofinal family of blowings up $f_i\:X_i=\Bl_{V_i}(X)\to X$ such that each $f_i$ is a log desingularization of $(X,D)$ along $\lam$.
    \end{itemize}
\end{defin}

Here is the general local uniformization conjecture.

\begin{conj}\label{logunifconj}
Any valuation on a quasi-excellent integral scheme $X$ is log uniformizable.
\end{conj}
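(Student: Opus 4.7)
\smallskip
\noindent\textbf{Proof proposal.} The plan is to proceed by induction on the height $h$ of the valuation ring $R$, which is finite by Lemma~\ref{finheight}. The base case $h=1$ falls outside the reduction methods of this section: it is the classical local uniformization problem, known for $\dim X\le 3$ by Cossart--Piltant but open in general in positive characteristic. I would treat this case as a hypothesis, which is precisely the reduction asserted by Theorem~\ref{unifschemes}; isolating this hard part is the whole point of the paper.

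For the inductive step, suppose $\lam$ has height $h\ge 2$ and that the conjecture is established for all valuations of height $<h$ on all qe integral schemes. Pick a non-trivial prime $\mathfrak{p}\subset R$. This decomposes $\lam$ as the composition of a valuation $\lam_0$ of height $<h$ on $X$ with center $\eta\in X$, and a valuation $\lam_1$ of height $<h$ on the Zariski closure $Y=\oeta$ with center $x$. By the inductive hypothesis applied to $\lam_0$, after replacing $X$ by a suitable blowing up we may assume $(X,D)$ is a regular pair at $\eta$; applying the hypothesis to $\lam_1$ on $Y$ with the induced boundary, after a further blowing up we may further assume $(Y,D\cap Y)$ is a regular pair at $x$. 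The subtlety is to perform the second reduction without losing the first, which is one of the reasons why the stronger log version is needed in the inductive hypothesis.

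The remaining task is to propagate regularity of $Y$ at $x$ to regularity of $X$ at $x$. Following the hint in the overview, I would choose an effective divisor $E=V(b)\subset X$ so that $X$ is smooth along $Y\setminus E$, and consider the blowing up $X_l\to X$ along $\cJ_l=(\cI_Y,b^l)$ for large $l$. A direct chart computation should establish that, under the mild hypothesis of Lemma~\ref{keylem}, the strict transform of $Y$ in $X_l$ is isomorphic to $Y$, and that as $l\to\infty$ the formal completion of $X_l$ along $Y\cap E$ stabilizes to the tubular formal neighborhood of $Y$ that one would obtain inside \emph{any} regular ambient scheme of the same dimension. In that idealized model $X_l$ is manifestly regular along the relevant locus, so transferring back yields regularity of $X_l$ at the lift of $x$, while the explicit description of the exceptional divisor in charts gives the snc property of the new boundary $D\cup V(\cJ_l)$ pulled back to $X_l$.

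Finally, to satisfy the cofinal-family requirement of Definition~\ref{locunifdef}, one observes that the entire construction can be performed on top of any given blowing up $X'\to X$: reapply the inductive hypothesis cofinally at $\eta'$ and at $x'$, then run the $\cJ_l$-construction on the result. Since compositions and dominations of blowings up are again blowings up, the resulting map to $X$ is a blowing up dominating $X'$, yielding the required cofinality. The main obstacle, as noted, is the height-one case; the rest is a technical but essentially formal exercise in bookkeeping of blowings up, strict transforms, and snc divisors, with the $\cJ_l$ construction as the one genuinely clever ingredient.
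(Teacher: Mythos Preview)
The statement is a \emph{conjecture}; the paper does not prove it. What the paper proves is the reduction Theorem~\ref{unifschemes}, and you correctly recognize this: your proposal is really an outline of that reduction, with the height-one base case explicitly left as an open hypothesis. On that level your sketch follows the paper's strategy quite closely---decompose $\lam$ via a nontrivial prime of $R$, uniformize $\lam_0$ at $\eta$ by induction, uniformize $\olam$ on $Y$ at $x$ by induction, then apply the key Lemma~\ref{keylem}.

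Two points where your outline is looser than the paper's actual argument. First, you skip what the paper isolates as Step~2: after making $(X',D')$ regular at $\eta'$ you must arrange that a regular system of parameters $t_1,\dots,t_n$ at $\eta'$ \emph{extends to elements of $\cO_{X',x'}$}. This is a genuine extra blowing up (principalizing the Weil divisors $\overline{V(t_i)}$), and without it the hypothesis of Lemma~\ref{keylem} is not available. Second, the boundary you feed into the induction on $Y$ is not $D\cap Y$ but a divisor $E'\subset Y'$ chosen so that $(X',\cup D'_i)$ is already a regular pair along $Y'\setminus E'$ (the paper's Step~3); this is what lets the key lemma control the full boundary $D'=f^{-1}(D\cup V)$ and not merely the strict transform. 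Your comment that ``the stronger log version is needed in the inductive hypothesis'' is correct in spirit, but the precise divisor one must log-resolve on $Y$ is this auxiliary $E'$, not $D\cap Y$.
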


\begin{rem}\label{unifrem}
(i) Usually, one considers modifications or projective modifications in the definition of local uniformization. However, the class of blowings up is more convenient to work with in birational geometry and it is almost as general as the class of projective modifications. So, we prefer to work with it and the reader will see that this is quit beneficial.

(ii) If one establishes (log) uniformizability of valuations on a class $\bfS$ of schemes which is closed under modifications, then it suffices to prove that any $X\in\bfS$ possesses a single (log) desingularization along any valuation $\lam\:S\to X$.

(iii) Usually, it is the uniformizability property which is really useful. Originally, by local uniformization of a valuation Zariski meant only existence of a single desingularization along the valuation, but in \cite[Theorem~${\rm U}_1$]{Zariski} he proved that valuations on varieties of characteristic zero are unformizable in our sense. Also, in the introduction Zariski provided a somewhat analogous but more concrete motivation for proving this slightly stronger statement.

(iv) Log uniformizability of valuations is known for qe schemes of characteristic zero. For example, it can be deduced from the global resolution (though this is not the only way to prove it). In addition, log uniformizability was established in \cite{CP} by Cossart and Piltant for all qe schemes of dimension bounded by 3 (and then they used it to prove strong global resolution). The standard resolution conjectures imply that it should hold for the class of all qe schemes, but already uniformizability of valuations on fourfolds over a perfect fields is wide open.
\end{rem}

\subsection{Induction on height}

\subsubsection{Reduced irreducible components}
We start with a lemma which studies the situation when an integral closed subscheme $\Spec(C/J)$ of an affine scheme $\Spec(C)$ is generically an open subscheme.

\begin{lem}\label{annlem}
Assume that $C$ is a ring, $J$ is a finitely generated prime ideal with quotient $B=C/J$ and $0\neq b\in B$ is an element such that the induced morphism $\Spec(B_b)\into\Spec(C)$ is an open immersion. Then there exists $n>0$ and a lift $c\in C$ of $b^n$ such that $J=\Ann(c)$.
\end{lem}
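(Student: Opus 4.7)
The plan is to split the conclusion into two parts: first produce some element of $C$ whose annihilator is exactly $J$, then adjust it by a multiplier so that its image in $B$ becomes a power of $b$.

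Let $\tilde b\in C$ be any lift of $b$. A prime of $C$ lies in the image $U$ of $\Spec(B_b)\into\Spec(C)$ iff it contains $J$ and not $\tilde b$, so $U=V(J)\cap D(\tilde b)$. By hypothesis $U$ is open in $\Spec(C)$ and contains the generic point $J$ of $V(J)$, so I would pick a principal open $D(c_0)\subseteq U$ with $c_0\notin J$. The open-immersion hypothesis applied to $D(c_0)\subseteq U$ tells us that $C\to C_{c_0}$ factors through $B\to B_b$ and identifies $C_{c_0}\cong(B_b)_{\bar c_0}$; in particular $JC_{c_0}=0$. Finite generation of $J$ then gives $M\ge 1$ with $c_0^M J=0$, i.e.\ $J\subseteq\Ann(c_0^M)$. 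The reverse inclusion follows because if $xc_0^M=0$ in $C$ then $\bar x=0$ in $C_{c_0}\cong(B_b)_{\bar c_0}$, while both $B\into B_b$ and $B_b\into(B_b)_{\bar c_0}$ are injective (using primality of $J$ together with $b\ne 0$ and $\bar c_0\ne 0$), so $x\in J$. Thus $\Ann(c_0^M)=J$.

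For the correction I would exploit that, since $U=V(J)\cap D(\tilde b)$ is open in $\Spec(C)$, it is clopen in $D(\tilde b)$, hence cut out by an idempotent of $C_{\tilde b}$; clearing denominators shows $U=D(g\tilde b)$ for some $g\in C$, so we may replace $c_0$ by $g\tilde b$ and arrange $D(c_0)=U$, in which case $c_0$ is a unit in $C_{c_0}=B_b$ and $\bar c_0$ is a unit in $B_b$. An equation $\bar c_0\cdot u=b^\ell$ in $B_b$ with $\ell\ge 1$ then, after clearing denominators in the domain $B$, becomes $\bar c_0\cdot s=b^k$ for some $s\in B$ and $k\ge 1$. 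Picking any lift $\tilde s\in C$ and setting $c:=c_0^M\tilde s^M$, the image of $c$ in $B$ equals $(\bar c_0 s)^M=b^{kM}$, so $n:=kM$ works.

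The annihilator stays correct: $J\subseteq\Ann(c)$ from $Jc_0^M=0$, and conversely if $xc=0$ in $C$ then in $C_{c_0}=B_b$ both $\bar c_0$ and $s$ are units (the latter because $\bar c_0\cdot s=b^k$ with $\bar c_0,b$ units in $B_b$), so $\bar x=0$ in $B_b$ and then $x\in J$ by the injection $B\into B_b$. The main technical obstacle I anticipate is the correction step: the natural candidate $c_0^M$ lifts the wrong residue, and replacing it by something lifting $b^n$ without disturbing the annihilator requires $\bar c_0$ to be invertible in $B_b$ — which is exactly what the openness of $U$ in $\Spec(C)$, rather than merely in $V(J)$, lets one force by absorbing a factor of $\tilde b$ into $c_0$.
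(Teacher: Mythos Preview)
Your proof is correct and follows essentially the same approach as the paper: both use that $U=\Spec(B_b)$ is clopen in $D(\tilde b)$ to produce an element $c_0\in C$ with $C_{c_0}=B_b$, and then identify $J$ with the $c_0$-torsion via finite generation. The paper's only shortcut is that its particular choice $c_0=ts$ (where $t/s^m$ is the idempotent $(1,0)$ in the splitting $C_s=B_b\times B'$) already lifts $b^{m+1}$, so your separate correction step multiplying by $\tilde s^M$ is not needed there.
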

\begin{proof}
To start with choose an arbitrary lift $s\in C$ of $b$. Since $\Spec(B_b)$ is open and closed in $\Spec(C_s)$, there exists a splitting $C_s=B_b\times B'$. Choose a presentation $(1,0)=t/s^m\in C_s$ with $t\in C$. Since $B$ is integral and $b\neq 0$, we have that $B\subseteq B_b$ and hence $t$ is mapped to $b^m$ in $B$. Thus, $ts$ is a lift of $b^{m+1}$ and $C_{ts}=(B_b\times B')_{(1,0)}=B_b$.

The localization $C\to C_{ts}$ factorizes as $C\to B\into B_b$, so has kernel $J$. Hence $J$ consists precisely of those elements that are killed by a power of $ts$. Using that $J$ is finitely generated we can find a single element $(ts)^l$ which annihilates $J$. So, $c=(ts)^l$ and $n=l(m+1)$ are as required.
\end{proof}

\subsubsection{Strict transform}
Recall that the strict transform of a closed subscheme $Z\into X$ under the blowing up $X'=\Bl_V(X)\to X$ is the schematic closure of $Z\setminus V$ in $X'$ and it easily follows from the universal property of blowings up that $Z'=\Bl_W(Z)$, where $W=V\times_XZ$. In the affine case this is compatible with charts: if $X=\Spec(A)$, $Z=\Spec(\tilA)$, $a\in A$ vanishes on $V$ and $\tila\in\tilA$ is the image of $a$, then $Z'_\tila=X'_a\times_{X'}Z'$ (recall that by our conventions $X'_a$ denotes the chart where $a$ generates $\calI_V\calO_{X'}$, and similarly for $Z'_\tila$). We will also need the following computation with strict transforms.

\begin{lem}\label{strictlem}
Assume that $X=\Spec(A)$ is an affine scheme with a closed subscheme $Z=\Spec(C)$, where $C=A/I$. Assume that $a\in A$ is an element and $X'\to X$ is the blowing up along the ideal $=I+(a)$, and let $X'_a=\Spec(A')$ be the $a$-chart of $X'$. Then,

(0) $A'=A[\frac{I}{a}]\subseteq A_a$ is the $A$-subalgebra of $A_a$ generated by the elements $t/a$ with $t\in I$.

(i) The strict transform of $Z$ is contained in the $a$-chart $X'_a$ and is isomorphic to the closed subscheme $Z'=\Spec(C/J)$, where $J=\cup_n\Ann(c^n)$ and $c$ is the image of $a$ in $C$.

(ii) If $a'\in A$ is another element whose image in $C$ equals $c$, then $a'=au$ for an element $u\in A'$ which is invertible in a neighborhood of $Z'$ in $X'$.

(iii) If $J=\Ann(c)$ in (i), then $Z'$ is the vanishing locus of the ideal $I'=a^{-1}I$ on $X'_a$, namely, $Z'=\Spec(A'/I')$.
\end{lem}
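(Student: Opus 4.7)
The plan is to address parts (0)--(iii) in order, with (ii) doing most of the technical work. Part (0) is the standard description of the affine charts of a blowing up: for the blown-up ideal $I+(a)=(t_1\. t_n,a)$, the $a$-chart equals $\Spec$ of the $A$-subalgebra of $A_a$ generated by $t_1/a\. t_n/a$ (the generator $a/a=1$ is redundant), so $A'=A[I/a]\subseteq A_a$. For (i), my plan is to invoke the universal property of the strict transform: $Z'$ equals the blowing up of $Z=\Spec(C)$ along the restricted ideal $(I+(a))\cdot\cO_Z=(c)$. This is a principal ideal, so its blow-up is the universal $Z$-scheme on which $c$ becomes a non-zerodivisor; that scheme is affine and equal to $\Spec(C/J)$ with $J=\cup_n\Ann(c^n)$, the $c$-torsion submodule of $C$. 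That $Z'\subseteq X'_a$ follows from the same universal property: on $Z'$ the pulled-back blow-up ideal $(c)$ is generated by the image of $a$, forcing $Z'\to X'$ to factor through the $a$-chart.

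For (ii), I would directly compute $A'/I'$. By (0), $I'=(I/a)A'$ contains $I$ (since $t=a(t/a)$ for each $t\in I$), so the natural map $A\to A'/I'$ factors through $C=A/I$. Surjectivity is immediate because the $A$-algebra generators $t_i/a$ of $A'$ already lie in $I'$. For the kernel, suppose $y\in A$ maps to zero; then $y\in I'$, hence $ay\in IA'\subseteq IA_a$, so $a^My\in I$ already in $A$ for some $M\ge 1$. This places $\bar y$ in $\Ann(c^M)$, which by the hypothesis $J=\Ann(c)$ equals $\Ann(c)=J$. Conversely, any $y\in A$ with $\bar y\in\Ann(c)$ satisfies $ay\in I$, so $y=(ay)/a$ is a well-defined element of $A_a$ that lies in $(I/a)A'=I'$. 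This gives $A'/I'\cong C/J$ as required.

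For (iii), write $a'=a+t$ with $t\in I$ (since $a$ and $a'$ have the same image $c$ in $C$) and set $u=1+t/a\in A'$. Then $a'=au$ in $A_a$, hence in $A'$, and $u\equiv 1\pmod{I'}$ because $t/a\in I'$. Thus $u$ is a unit at every point of $Z'=V(I')$ and therefore invertible on an open neighborhood of $Z'$ in $X'$. The main obstacle I anticipate is the argument for (ii): the comparison between $I'$-kernels in $A'$ and $c$-torsion in $C$ must carefully exploit the inclusion $A'\subseteq A_a$ from (0), and the hypothesis $J=\Ann(c)$ enters precisely at the step where one collapses ``$a^My\in I$ for some $M$'' into the cleaner conclusion ``$\bar y\in\Ann(c)$''.
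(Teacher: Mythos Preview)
Your proof is correct and follows essentially the same approach as the paper's. The only cosmetic difference is in (ii): the paper works with the surjection $\phi\colon A'\to C'=C/J$ coming from (i) and shows $\Ker(\phi)=I'$, whereas you compute $A'/I'$ via the surjection $A\to A'/I'$ and identify its kernel modulo $I$ with $J$; the two computations are dual and use the hypothesis $J=\Ann(c)$ at the same point. One tiny remark on (iii): writing $Z'=V(I')$ presupposes (ii)'s hypothesis, but only the inclusion $Z'\subseteq V(I')$ is needed (and it holds unconditionally, since $I'\subseteq\Ker(\phi)$ as in the first half of (ii)), so your conclusion that $u\equiv 1\pmod{I'}$ is invertible near $Z'$ is valid in any case.
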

\begin{proof}
(0) This is the classical chart description of blowings up.

(i) The strict transform $Z'\to Z$ is the blowing up along the ideal $IC+aC=(c)$. Therefore $Z'=\Spec(C')$, where $C'$ is the image of $C$ in $C_c$. Clearly, $C'=C/J$. In addition, $X'$ is covered by $X'_a$ and the charts $X'_t$ with $t\in I$, and $Z'$ is disjoint from each $X'_t$ because $X'_t\times_{X'}Z'$ is the chart of the blowing up $Z'\to Z$ corresponding to the image $\tilt\in C$ of $t$, but $\tilt=0$.

(ii) We have that $a'=a+t$ with $t\in I$. Therefore, in $A'$ we have that $a'=au$, where $u=1+a^{-1}t$. We claim that it is invertible in a neighborhood of $Z'=Z'_c$ because $a^{-1}t$ vanishes on $Z'$. Indeed, $a^{-1}t$ vanishes on $Z_a=Z\times_X\Spec(A_a)$ in the localization $X_a=\Spec(A_a)$, but $Z'_c$ is by definition the schematic closure of $Z_a$ in $X'_a$, hence $a^{-1}t$ vanishes on the whole $Z'_c$.

(iii) The homomorphism $\phi\:A'\to C'$ is onto and we should prove that $\Ker(\phi)=I'$. We showed in the proof of (ii) that for each $t\in I$ the element $t'=a^{-1}t\in I'$ vanishes on $Z'_c$, and hence $I'\subseteq\Ker(\phi)$.

Conversely, $A'=A+I'$, hence it suffices to show that any $x\in A\cap\Ker(\phi)$ lies in $I'$. Since $x$ is mapped to $0$ in $C'$, its image in $C$ is contained in $J$ and hence is annihilated by $c$. This implies that $ax\in I$ and we obtain that $x\in I'$.
\end{proof}

\subsubsection{The key lemma}
When running induction on height of valuations in the local uniformization problem one naturally arrives at the situation, when a scheme $X$ is regular at a point $\eta$ and the Zariski closure $Y$ of $\eta$ is regular at a point $x$. The basic obstacle for an induction step is that $X$ does not have to be regular at $x$ (the simplest example is obtained by taking $x$ the usual cone singularity and $Y$ a line passing through the origin). This forces us to look for a modification which partially resolves $x$, and, fortunately, it suffices to do this along the strict transform of $Y$. Slightly surprisingly, this turns out to be a simple task, at least, under a mild technical assumption.

\begin{lem}\label{keylem}
(i) Let $X$ be a noetherian scheme, $Y\into X$ an integral closed subscheme with generic point $\eta$ and $x\in Y$ a point. Assume that $X$ and $Y$ are regular at $\eta$ and $x$, respectively, and there exist elements $\ut=(t_1\.t_n)$ in $\calO_{X,x}$ whose images form a regular family of parameters of $\calO_{X,\eta}$. In particular, $D=V(t_1\dots t_n)$ is snc at $\eta$. Then there exists a blowing up $f\:X'=\Bl_V(X)\to X$ such that the strict transform $g\:Y'\to Y$ is an isomorphism over $x$ and $X'$ is regular at $x'=g^{-1}(x)$.

(ii) Assume, in addition, that there exists a divisor $E\subset Y$ which is snc at $x$ and such that the pair $(X,D)$ is regular at any point of $Y\setminus E$ which generizes $x$. Then in addition to the assertion of (i) one can achieve that the closed set $D'=f^{-1}(D\cup V)$ provided with the reduced scheme structure is an snc divisor at $x'$.
\end{lem}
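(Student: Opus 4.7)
The plan is to perform an explicit blowing up along a center of the form $V = V(\mathcal{J}_l)$ with $\mathcal{J}_l = \mathcal{I} + (b^l)$, where $\mathcal{I} = (t_1, \ldots, t_n)$ is the subideal of $\mathcal{I}_Y$ generated by (lifts of) the prescribed $t_i$ and $b$ is an auxiliary element, and to verify both conclusions via a chart computation using Lemmas~\ref{annlem} and~\ref{strictlem}. Reducing to the affine case $X = \Spec A$ with $Y = V(I)$, since $(t_i)A_I = IA_I$ and $I$ is finitely generated, $I/\mathcal{I}$ is a finitely generated $A$-module vanishing at $\eta$, so some $s \in A\setminus I$ satisfies $sI \subseteq \mathcal{I}$. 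Lemma~\ref{annlem} applied to $C = A/\mathcal{I}$, $J = I/\mathcal{I}$, $B = A/I$ and the image of $s$ produces a lift $c \in A$ of some power $\bar s^n$ with $\Ann_{A/\mathcal{I}}(\bar c) = I/\mathcal{I}$. Choose $b = c\cdot b_0$ for part (i), with $b_0 \in \mathfrak{m}_x\setminus I$, and $b = c\cdot \prod_j \tilde e_j$ for part (ii), where the $\tilde e_j\in A$ are lifts of local equations of the components of $E$ through $x$. By primality of $I$ one retains in either case $\Ann_{A/\mathcal{I}}(\bar b) = I/\mathcal{I}$, and in particular $bI \subseteq \mathcal{I}$.

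In the $b^l$-chart, by Lemma~\ref{strictlem}(0), $A' = A[\mathcal{I}/b^l] = A[u_1,\ldots,u_n]$ with $u_i = t_i/b^l$, as a subring of $A_b$. Since $V(\mathcal{I})$ and $Y$ agree over $\{b\ne 0\}$, the strict transform of $Y$ equals that of $V(\mathcal{I})$, and Lemma~\ref{strictlem}(i,ii) applied to $\mathcal{I}$ and $a = b^l$ identifies it with $V(\mathcal{I}')$ where $\mathcal{I}' = (u_1,\ldots,u_n)A'$: the stabilization $J_0 = \Ann(\bar b^l) = I/\mathcal{I}$ is precisely what the choice of $b$ guarantees (and holds for every $l\ge 1$). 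Thus $A'/\mathcal{I}' \cong A/I$, so $g\colon Y'\to Y$ is an isomorphism over a neighborhood of $x$, and the unique point $x'\in Y'$ above $x$ corresponds to the preimage of $\mathfrak{m}_x/I$. Each extra generator $s_j$ of $I$ beyond the $t_i$ satisfies $bs_j = \sum_k a_{jk} t_k$, so in the torsion-free ring $A' \subset A_b$ one gets $s_j = b^{l-1}\sum_k a_{jk} u_k \in \mathcal{I}'$; therefore $IA' \subseteq \mathcal{I}'$ and $\mathfrak{m}_{x'} = \mathfrak{m}_x A' + \mathcal{I}'$ is generated by $u_1,\ldots,u_n$ together with lifts $y_1,\ldots,y_{d_Y}$ of a regular family of parameters of $\mathcal{O}_{Y,x}$. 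Since $A'/\mathcal{I}' \cong \mathcal{O}_Y$ has dimension $d_Y$ at $x$ and $\mathcal{I}'$ has height $n$, these $n+d_Y$ elements form a regular system of parameters of $\mathcal{O}_{X',x'}$.

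For part (ii), $V \subseteq V(\mathcal{I}) \subseteq D$ as sets so $D\cup V = D$, and in the $b^l$-chart $f^{-1}(D)$ has reduced support $V(b)\cup\bigcup_i V(u_i)$; with $b = c\cdot\prod_j \tilde e_j$ this breaks into the components $V(\tilde e_j)$, $V(c)$, and $V(u_i)$. One arranges $s$ (hence $c$) to be a unit at $x$ so that $V(c)$ contributes no new component through $x'$; then the snc hypothesis on $E$ together with the regular-pair hypothesis on $(X,D)$ at points of $Y\setminus E$ generizing $x$ force the $\tilde e_j$ and $u_i$, together with a complementary subset of the $y_j$'s, to form an snc configuration at $x'$. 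The main obstacle is the regularity step for $X'$ at $x'$: one must verify that killing $b$-torsion introduces no spurious relations among the $u_i$, so that the $n+d_Y$ generators of $\mathfrak{m}_{x'}$ really yield a regular local ring of that dimension. A secondary subtlety in part (ii) is aligning $b$ with the snc data of $E$ while preserving the sharp annihilator condition, which is precisely why Lemma~\ref{annlem} (producing an element whose annihilator is exactly $I/\mathcal{I}$, rather than some possibly larger ideal) is needed.
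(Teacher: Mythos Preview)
Your treatment of part~(i) is correct and follows the paper's blueprint: blow up along $(t_1,\dots,t_n,a)$ for a suitable element $a$, use Lemma~\ref{annlem} to control the strict transform, and verify regularity at $x'$ via a parameter count. The extra factor $b_0$ is harmless but unnecessary.

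Part~(ii), however, contains a genuine gap. You assert that ``one arranges $s$ (hence $c$) to be a unit at $x$''. But $s$ a unit in $\calO_{X,x}$ would force $I_x=(t_1,\dots,t_n)_x$, which is not assumed and is false in general. For instance, take the cone $A=k[p,q,r]_{(p,q,r)}/(pr-q^2)$, $Y=V(p,q)$, $t_1=q$, $E=V_Y(r)$: any $s$ with $sI\subseteq(q)$ lies in $(q,r)\subset\mathfrak m_x$. Once $c\in\mathfrak m_x$, the component $V(c)$ of $f^{-1}(D)$ does pass through $x'$, and there is no reason for it to be snc there: the image of $c$ in $B=A'/\mathcal I'$ is $\bar s^{\,n}$, and even if one uses the regular-pair hypothesis along $Y\setminus E$ to arrange that $\bar s$ is a monomial in the $\bar e_j$, two lifts to $A'$ of the same element of $B$ need not differ by a unit at $x'$. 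So the snc claim for $D'$ is unjustified.

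This is precisely the point the paper's argument addresses, and it does so by a device absent from your proposal: instead of blowing up along $(t_1,\dots,t_n,a)$ it blows up along $(t_1,\dots,t_n,a^2)$. The resulting chart ring $A''$ contains $A'$ and equals $A'[t'_1/a,\dots,t'_n/a]$, so one is simultaneously in the $a$-chart of the blow-up of $X'$ along $(t'_1,\dots,t'_n,a)$. Lemma~\ref{strictlem}(iii) then applies to the two lifts $a$ and $a'=w\prod s_i^{n_i}$ of $b^l\in B$ to $A'$ and yields $a=va'$ with $v\in\calO_{X'',x''}^\times$; hence $V(a)=\bigcup V(s_i)$ near $x''$ and the snc property follows. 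Your single blow-up lacks this extra layer, so strictlem(iii) is unavailable and the $V(c)$ component cannot be tamed by the argument you give.
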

\begin{proof}
First, we observe that it suffices to establish the local case when $X$ coincides with the localization $X_x=\Spec(\calO_{X,x})$. Indeed, all assumptions of the lemma and conditions the blowing up $f$ should satisfy are local at $x$, and once an appropriate blowing up $\Bl_W(X_x)\to X_x$ is constructed we can extend it to a blowing up of the whole $X$ just by blowing up the schematic closure $V$ of $W$ in $X$ (we use the simple fact that $V\times_XX_x=W$ and blowings up are compatible with flat morphisms).

Thus, we assume in the sequel that $X$ is a local scheme: $X=\Spec(A)$ where $A=\cO_{X,x}$. In particular, $Y$ is a regular local scheme and in the case of (ii) $E$ is an snc divisor on $Y$. Let $I$ be the ideal defining $Y$, so $Y=\Spec(B)$ with $B=A/I$, and in case (ii) let $b\in B$ be an element such that $E=V_Y(b)$. We will argue in both cases simultaneously, so when dealing with (i) just fix a large enough divisor $E=V_Y(b)$ on $Y$, not necessarily snc, such that the pair $(X,D)$ is regular at any point of $Y\setminus E$. Now, the idea is very simple: take $a\in A$ to be a lift of $b^\ell$ with a large enough $\ell$ and blow up $X$ along $V(\ut,a)$. Let us work this plan out.

Set $C=A/(\ut)$ and $Z=\Spec(C)$. In particular, $Y$ is a closed subscheme of $Z$ and $B=C/J$ for the prime ideal $J=IC$. Note that $Y_b=\Spec(B_b)$ is open in $Z$. Indeed, $Z$ contains an irreducible component with generic point $\eta$, hence $Y\into Z$ is a closed immersion of codimension 0. Moreover, $D$ is snc and $Z$ is its stratum of maximal multiplicity at any point $z\in Y_b$. Therefore $Z$ is integral at $z$ and necessarily $Y\into Z$ is an open immersion at $z$. By Lemma~\ref{annlem} there exists $\ell>0$ and a lift $c\in C$ of $b^\ell$ such that $J=\Ann(c)$. Choose any lift $a\in A$ of $c$ and consider the blowing up $f\:X'\to X$ along $V=V(\ut,a)$.

Let $h\:Z'\to Z$ be the strict transform with respect to $f$. By Lemma \ref{strictlem}(i), $Z'$ is contained in the $a$-chart $X'_a=\Spec(A')$, where $A'=A[\ut']$ with $t'_i=t_i/a$, and, $Z'=\Spec(C/J)=Y$. In particular, $x'=h^{-1}(x)$ is a single point, and $Y'$ is a closed subscheme containing the generic point $\eta$ of the integral scheme $Z'$ and hence $Y'=Z'=Y$. In addition, note that $J=\Ann(c^m)$ for any $m>0$ because $J=\Ann(c)$ is prime, and hence $Z'$ is the vanishing locus of $\ut'$ by part (iii) of the same lemma. Thus, $A'/(\ut')=B$.

Let $\us=(s_1\.s_m)$ be a set of elements of $A'$ whose image $\os_1\.\os_m\in B$ is a family of regular parameters of the regular local ring $B$. Then $(\ut',\us)$ is a regular family of parameters of $\calO_{X',x'}$ because this set generates $m_{x'}$ in the obvious way and $\dim(\calO_{X',x'})\ge\dim(\calO_{X',\eta})+\dim(\calO_{Y',x'})=n+m$. This proves that $X'$ is regular at $x'$, finishing the proof of (i).

In case (ii) we choose the parameters $s_i\in A'$ more specifically, namely we choose them so that $E=V(\os_1\dots\os_r)$ for $r\le m$ and $b=\prod_{i=1}^r\os_i$. Establishing (ii) we should also care for the preimage of $D\cup V$. We are only interested in studying the chart $f_a\:X'_a\to X$, so set $D'=f_a^{-1}(D\cup V)$. First, let us see what can go wrong. Since $V\subset D$ we have that on the level of sets $$D'=f_a^{-1}(D)=V(t_1\dots t_n)=V(a^nt'_1\dots t'_n)=V(t'_1\dots t'_n)\cup V(a).$$ The component $V(t'_1\dots t'_n)$ (which is, in fact, the strict transform of $D$) is snc at $x'$ because $\ut'$ is a partial family of regular parameters, but concerning the exceptional component $E'=V(a)$ we only know that its restriction onto $Z'=Y$ is given by the vanishing of the image $b^\ell\in B$ of $a$, namely, $E'\times_{X'}Y=\Spec(B/(b^\ell))$. The divisor $D'$ (and the component $E'$) can be very singular, so we should improve $X'$ further.

Next, we outline the idea without proofs (though the interested reader can easily check our assertion). It turns out that it suffices just to replace $X'$ by its blow up $X''$ along the intersection $E'\times_{X'}Y$. In fact, one blows up the center which is given at $x'$ by $(\ut',a)$ obtaining the new coordinates $t''_i=a^{-1}t'_i=a^{-2}t_i$. This also suggests the argument which we use below -- instead of the sequence of two blowings up we will just blow up $(\ut,a^2)$ at once. So, consider the blowing up $g\:X''\to X$ along $(\ut,a^2)$ and let us prove that it satisfies all assertions of the lemma.

Consider the $a^2$-chart $g_{a^2}\:X''_{a^2}=\Spec(A'')\to X$, where $A''=A[\ut'']$ with $t''_i=t_i/a^2$.  The same argument as was used to study $f$ shows that the strict transform $Z''$ of $Z$ is isomorphic to $Y$, it is given by the vanishing of $\ut''$ and lies in $X''_{a^2}$, and one has that $$D''=g^{-1}_{a^2}(D\cup V)=g^{-1}_{a^2}(D)=V(t''_1\dots t''_n)\cup V(a).$$ We claim that $D''$ is snc at the preimage $x''\in Z''$ of $x$. Recall that $a$ is a lift of $b^\ell\in B$ to $A$, hence the same is true for its image in $A'$, which we denote by the same letter $a$. As we remarked above $V_{X'_a}(a)$ can be very singular. On the other hand, $a'=\prod_{i=1}^rs_i^\ell$ is another lift of $b^\ell=\prod_{i=1}^r\os_i^\ell$ to $A'$ and $V_{X'_a}(a')$ is snc at $x'$. The point is that after the blowing up $X''\to X'$ the strict transforms of $V_{X'_a}(a)$ and $V_{X'_a}(a')$ coincide locally at $x''$ by Lemma~\ref{strictlem}(ii). Indeed, $A''=A'[\ut'/a]$, hence $X''_a$ is also the $a$-chart of the blowing up of $X'$ along $(\ut',a)$, and by Lemma~\ref{strictlem}(ii) we obtain that $a=va'$ for a unit $v\in\calO_{X'',x''}$. In particular, in a neighborhood of $x''$ we have that $V_{X''_a}(a)=V_{X''_a}(a')=\cup_{i=1}^r V_{X''_A}(s_i)$. The tuple $(\ut'',\us)$ is a regular family of parameters at $\calO_{X'',x''}$ (by the same argument as was used for $(\ut',\us)$ and $\cO_{X',x'}$), hence $D''$ is indeed an snc divisor at $x''$.
\end{proof}

\begin{rem}
It looks tempting to simplify the above proof by carefully choosing $a$ from the beginning. Namely, the homomorphism $A\to B$ is surjective, hence instead of the lifts $s_i\in A'$ of $\os_i$ we can even take $s_i\in A$. At first glance, it seems that in such a case simply taking $a=\prod_{i=1}^rs_i^l$ one guarantees that it is already split into a product, and hence a single blowing up of $(\ut,a)$ suffices. However, the problem is that our choice of the lift $a\in A$ went through a choice of the lift $c\in C$ of $b=\prod_{i=1}^r\os_i$ such that $J=\Ann(c)$. We used Lemma~\ref{annlem} to find it, and this imposes some restrictions. We do not know if a tricky choice of $a$ can remedy this problem, but it is not important for our goals.
\end{rem}

\subsection{The main theorem for schemes}
Here is the main result of this paper for local uniformization of schemes. In particular, it reduces Conjecture~\ref{locunifconj} to the case of valuations of height one, but we prefer a more general formulation, which can be applied to various classes of schemes. For example, it applies to the class of algebraic varieties, or varieties of dimension bounded by some number.jdjhdj

\begin{theor}\label{unifschemes}
Let $\bfS$ be a class of qe schemes closed under modifications and closed immersions. If any valuation of height one on an integral scheme from $\bfS$ is uniformizable or log uniformizable, then the same is true for any valuation on integral schemes from $\bfS$.
\end{theor}
\begin{proof}
For clarity we will consider the logarithmic case, as the argument in the non-logarithmic case is obtained by omitting some parts of the construction. We will indicate the required changes at the end of the proof.

By Remark~\ref{unifrem}(ii) it suffices to prove that if $X\in\bfS$ is an integral scheme, $D\subsetneq X$ a closed subset and $\lam\:S=\Spec(R)\to X$ a valuation, then there exists a desingularization $f\:X'=\Bl_V(X)\to X$ of $(X,D)$ along $\lam$. Replacing $R$ by $R\cap k(X)$ we can assume that $k(S)=k(R)$. If $R$ is of height 1, then $f$ exists by our assumption, so assume that it is of height $h\ge 2$. Recall that $h$ is finite by Lemma~\ref{finheight}. Choose a non-zero and non-maximal prime ideal $p\subset R$ and consider the valuation rings $R_0=R_p$ and $\oR=R/p$. Then $S$ is covered by (in fact, pushed out from) valuation subschemes of smaller height $S_0=\Spec(R_0)$ and $\oS=\Spec(\oR)$, which are open and closed, respectively. In particular, valuations of $S_0$ and $\oS$ on schemes from $\bfS$ are log uniformizable by the induction assumption. Let $\eta\in X$ be the center of the induced valuation $\lam_0\:S_0\to X$ and let $Y$ be the Zariski closure of $\eta$, then $\lam$ induces a valuation $\olam\:\oS\to Y$ (here $k(Y)\subseteq k(\oS)$ does not have to be an equality).

A desingularization along $\lam$ will be constructed by composing a tower of blowings up which gradually improve the properties of $X$ and $D$ at the center of $\lam$. To simplify notation, we will replace $X$ by the modification constructed so far. So at each step we will construct a blowing up $f\:X'=\Bl_W(X)\to X$, and then simplify notation by replacing $X$ by $X'$ and $D$ by $D'=f^{-1}(D\cup W)$. This makes sense because composition of blowings up is a blowing up. By $\eta$ and $x$ we will always denote the centers of the lifts of $\lam_0$ and $\lam$, respectively, to the current $X$. Finally, by $Y$ and $\olam\:\oS\to Y$ we denote the Zariski closure of $\eta$ and the lift of $\olam$ to $Y$ (of course, $\olam$ is the restriction of $\lam$).

Step 1. {\it Replacing $X$ by its blowing up we can assume that $(X,D)$ is regular at $\eta$.} Indeed, just by the induction assumption applied to $\lam_0$, there exists a blowing up $f\:X'=\Bl_V(X)\to X$ such that $(X',D')$ is regular at $\eta$.

Step 2. {\it In addition to the condition of step 1, one can achieve that there exist elements $t_1\.t_n\in\calO_{X,x}$ whose images form a regular family of parameters of $\calO_{X,\eta}$ and such that $D=V(t_1\dots t_n)$ locally at $\eta$.} Since $D$ is snc at $\eta$, locally at $\eta$ it is of the form $V(t_1\dots t_m)$ where, $t_1\. t_m\in\calO_{X,\eta}$ form a partial family of regular parameters. Complete this family to a regular family of parameters $t_1\.t_n\in\calO_{X,\eta}$ and let $D_i$ be the schematic closure of $\Spec(\calO_{X,\eta}/(t_i))$ in $X$. Loosely speaking we will simply blow up the Weil divisors $D_i$ making them Cartier and hence achieve that up to a unit each $t_i$ extends to $\calO_{X,x}$. This will not modify $X$ at $\eta$, but will increase $D$.

Consider the ideals $\calI_i=\calI_{D_i}\subset\calO_{X}$ corresponding to $D_i$, their product $\calI=\prod_{i=1}^n\calI_i$ and the corresponding closed subscheme $W=V_{X}(\calI)$. The blowing up $g\:X'=\Bl_W(X)\to X$ makes the pullback of each $\calI_i$ an invertible ideal, that is, each $D'_i=D_i\times_{X}X'$ is a Cartier divisor. Therefore we can choose $t_i\in\calO_{X',x'}$ such that $D'_i=V(t_i)$ locally at $x'$.

Each $\calI_i$ is principal at $\eta$, hence $X'\to X$ is an isomorphism over $\eta$. Locally at $\eta$ we have that $D$ is a subset of  $W=\cup_{i=1}^nD_i$, hence locally at $\eta'$ the set $D'$ is the preimage of $W$. Using that $W=V(t_1\dots t_n)$ locally at $\eta$, we obtain that $D'=V(t_1\dots t_n)$ locally at $\eta'$, completing the step.

Step 3. {\it Let $D_i$ be the schematic closure of $\Spec(\calO_{X,x}/t_i\cO_{X,x})$. Then there exists a divisor $E\subset Y$ such that $(X,\cup_{i=1}^nD_i)$ is a regular pair at any point of $Y\setminus E$.} This is clear since $\cup_{i=1}^nD_i$ is snc at $\eta$.

Step 4. {\it In addition to the conditions of steps 1--3 we can achieve that $(Y,E)$ is a regular pair at $x$.} Applying the induction assumption we can find a blowing up $h\:Y'=\Bl_W(Y)\to Y$, which desingularizes $(Y,E)$ along $\olam$. Thus, $(Y',E'=h^{-1}(E\cup W))$ is regular at the center $x'$ of the lift $\olam'\:\oS\to Y'$ of $\olam$. Using the same center $W$ we obtain a blowing up $g\:X'=\Bl_W(X)\to X$ such that $Y'\into X'$ is the strict transform of $Y$ and the lift $\lam'\:S\to X'$ of $\lam$ restricts to the valuation $\olam'$ on $Y'$. In particular, $x'$ is the center of $\lam'$. Thus, the new condition of the step is satisfied, but we also have to check that the conditions of steps 1-3 were not destroyed.

Let $D'_i$ denote the Zariski closure of $\Spec(\calO_{X',x'}/t_i\calO_{X',x'})$. For any point $z'\in Y'\setminus E'$ we have that $X'\to X$ is a local isomorphism at $z'$ and takes it to $z\in Y\setminus E$. Since $(X,\cup_{i=1}^nD_i)$ is a regular pair at $z$ we obtain that $(X',\cup_{i=1}^nD'_i)$ is a regular pair at $z'$. Thus, $x'$ satisfies conditions of step 3. Conditions of steps 1 and 2 are satisfied because $\eta\notin W$, and hence locally at $\eta'$ and $\eta$ the pairs $(X',\cup_{i=1}^nD'_i)$ and $(X,\cup_{i=1}^nD_i)$ are isomorphic.

Step 5. {\it End of proof: one can achieve that $X'\to X$ provides a desingularization of $(X,D)$ along $\lam$.} To complete the argument we apply Lemma~\ref{keylem}(ii) to $x\in Y\subset X$, $E$ and $t_1\.t_n\in\calO_{X,x}$, obtaining a blowing up $f\:X'=\Bl_W(X)\to X$ such that the strict transform $Y'$ of $Y$ is isomorphic to $Y$, the preimage $x'\in Y'$ of $x$ is a regular point of $X'$, and $D'=f^{-1}(D\cup W)$ is snc at $x'$. Clearly, $x'$ is the center of the lift $\lam'\:S\to X'$ of $\lam$, hence $X'\to X$ is as required.

Finally we discuss the simplifications in the non-logarithmic case. They are as follows: one does not consider $D$ and $E$, one only worries for regularity of schemes and not pairs, and step 3 is not needed. For example, step 4 achieves that $Y$ is regular at $x$. However, one does introduce $t_1\.t_n\in\calO_{X,x}$ in step 2, and in step 5 one uses only part (i) of Lemma~\ref{keylem} with $x\in Y\subset X$ and $t_1\.t_n$ being the inputs.
\end{proof}

\section{Local uniformization for non-archimedean spaces}
In this section we fix a complete non-trivially valued real-valued field $k$ and denote by $\kcirc$, $\kcirccirc$ and $\tilk=\kcirc/\kcirccirc$ the ring of integers, the maximal ideal of $\kcirc$ and the residue field. By $\pi$ or $\varpi$ we denote any {\em quasi-unifrormizer} of $k$, that is, an element $\pi\in\kcirccirc\setminus\{0\}$. By {\em $k$-analytic spaces} we always mean Berkovich $k$-analytic spaces introduced in \cite[Chpater~1]{berihes}. We will only consider strictly $k$-analytic spaces, so the word ``strictly'' will usually be omitted. For any extension of complete real valued fields $l/k$ we set $X_l=X\wtimes_kl$.

\subsection{The local uniformization conjecture}\label{locunifsec}
We start with recalling some terminology and introducing the local uniformization conjecture.

\subsubsection{G-topology}
Each strictly $k$-analytic space is provided with the $G$-topology of strictly analytic domains. It is finer than the usual topology and it is used to define coherent sheaves on $X$, especially, when $X$ is not good. The site $X_G$ is equivalent to a topological space $|X_G|$ of all its points, and this space possesses a very natural interpretation. First, $|X_G|=X^\ad$, where $X^\ad$ is the Huber's adic space corresponding to $X$. Furthermore, $X\subseteq X_\ad$ and $X$ is the maximal locally Hausdorff quotient of $X^\ad$ with $r\:X^\ad\to X$ being the maximal generization map. Finally, the fiber of $X^\ad\to X$ over $x\in X$ is the germ reduction $\tilX_x$ from \cite{local-properties}. All in all, points of $X^\ad$ are interpreted as semivaluations and $y\in r^{-1}(x)$ can be interpreted as a valuation on $\wHx$ which is trivial on $\tilk$ and is centered on formal models of $X$.

\subsubsection{Formal models}
By an {\em admissible formal $\kcirc$-scheme} we mean a flat topologically finitely presented formal $\kcirc$-scheme and by $\gtX_s=\gtX\otimes_{\kcirc}\tilk$ we denote the (not necessarily reduced) {\em closed fiber} of $\gtX$. There is a natural generic fiber functor which associates to each admissible formal $\kcirc$-scheme a compact $k$-analytic space $X_\eta$. Locally it is defined by $\Spf(A)_\eta=\calM(A\otimes_{\kcirc}k)$ and the global construction is obtained by patching. A {\em formal model} of an analytic space $X$ is an admissible formal $\kcirc$-scheme $\gtX$ provided with an isomorphism $X=\gtX_\eta$. It admits natural specialization maps $X\into X^\ad\to\gtX$ from the analytic and adic generic fibers. For any complete extension $l/k$ we have that $\gtX_{\lcirc}:=\gtX\wtimes_{\kcirc}\lcirc$ is a formal model of $X_l$. An {\em admissible blowing up} of $\gtX$ is a formal blowing up along an open ideal $\calI\subseteq\calO_\gtX$, it does not modify the generic fiber (equivalently, it induces a blowing up of the trivial ideal $\calI\calO_X=\calO_X$).

\begin{rem}
A famous theorem of Raynaud states that the generic fiber functor induces an equivalence between the category of admissible formal schemes localized by the class of all admissible blowings up and the category of compact strictly $k$-analytic spaces. Loosely speaking, it asserts that any compact strictly $k$-analytic space possesses a formal model which is unique up to an admissible blowing up, and every morphism between compact strictly analytic spaces arises from a morphism between suitable formal models. In particular, $X^\ad$ is nothing else but the projective limit of all formal models of $X$.
\end{rem}

\subsubsection{Smooth morphisms}
Recall that a morphism $\gtX\to\gtY$ of admissible formal schemes is {\em smooth} (resp. {\em \'etale}, resp. {\em flat}, resp. {\em unramified}), if for any ideal of definition $\calJ\subset\calO_\gtY$ the morphism of schemes $(\gtX,\calO_\gtX/\calJ)\to(\gtY,\calO_\gtY/\calJ\calO_\gtY)$ is smooth (resp. \'etale, resp. flat, resp. unramified). In fact, it suffices to test a single ideal of definition and, as we show below, one can even work with the closed fiber, which does not correspond to an ideal of definition, but has the advantage of being a morphism of $\tilk$-varieties.

\begin{lem}\label{simpleetale}
Let $\gtX $ and $\gtY$ be admissible formal $\kcirc$-schemes. A $\kcirc$-morphism $f:\gtX\to\gtY$ is smooth (resp. \'etale, resp. flat, resp. unramified) if and only if its special fiber $f_s:\gtX_s\to\gtY_s$ is smooth (resp. \'etale, resp. flat, resp. unramified).
\end{lem}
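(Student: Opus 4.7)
The plan is to bridge the definition, which quantifies over all ideals of definition $\cJ\subset\cO_\gtX$, first to the single choice $\cJ=\pi\cO_\gtX$ for a quasi-uniformizer $\pi\in\kcirccirc$, and then to the closed fiber. Since the topology on $\cO_\gtX$ is locally $\pi$-adic for an admissible formal $\kcirc$-scheme, $\pi\cO_\gtX$ is indeed an ideal of definition. The first step is essentially the standard ``single fiber suffices'' remark already noted in the paper: any two ideals of definition are cofinal, and one propagates each of the four properties back and forth along nilpotent thickenings of source and target using the $\kcirc$-flatness of $\cO_\gtX$ and $\cO_\gtY$. This reduces the lemma to showing that $f_\pi\colon(\gtX,\cO_\gtX/\pi)\to(\gtY,\cO_\gtY/\pi)$ has property $P$ if and only if $f_s=f_\pi\otimes_{\kcirc/\pi}\tilk$ does.

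The crucial observation for this second comparison is that every $a\in\kcirccirc$ satisfies $|a|^n\le|\pi|$ for large $n$ by the ultrametric inequality, so $a^n\in\pi\kcirc$, and the kernel $\kcirccirc/\pi$ of $\kcirc/\pi\twoheadrightarrow\tilk$ consists of nilpotent elements. Consequently $\kcirccirc\cdot\cO_\gtX/\pi$ lies in the nilradical of $\cO_\gtX/\pi$, so $(\gtX,\cO_\gtX/\pi)$ and $\gtX_s$ share the same underlying topological space and $f_\pi,f_s$ induce the same map on points. The forward direction — $f_\pi$ has $P$ implies $f_s$ has $P$ — is then immediate from base change, since smoothness, \'etaleness, flatness and unramifiedness are all preserved under arbitrary base change.

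For the converse I would argue property-by-property in the local affine setting $\gtX=\Spf A$, $\gtY=\Spf B$ with $A$ topologically finitely presented and $\kcirc$-flat over $B$. The \emph{unramified} case reduces via Nakayama on the finitely generated $A/\pi$-module $M=\Omega^1_{A/B}/\pi$: if $M/\kcirccirc M=0$, then generators satisfy $m_i=\sum_j n_{ij}m_j$ with $n_{ij}\in\kcirccirc$, and Cayley--Hamilton yields $\det(I-N)\cdot M=0$, where $\det(I-N)\equiv 1\pmod{\kcirccirc}$ is a unit in $A/\pi$, forcing $M=0$. The \emph{flat} case I would extract from the Bosch--L\"utkebohmert--Raynaud criterion stating that flatness of a topologically finitely presented morphism of admissible formal $\kcirc$-schemes is detected on the closed fiber. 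The \emph{smooth} and \emph{\'etale} cases then decompose as flatness plus the corresponding fiberwise condition, and a fiber of $f_\pi$ or $f_s$ over a given physical point is literally the same algebra, since both reduce the base to the residue field $\kappa(f(x))$.

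The main technical obstacle is the flatness case in the converse: the surjection $\kcirc/\pi\twoheadrightarrow\tilk$ has kernel $\kcirccirc/\pi$ that is pointwise but not globally nilpotent, so the standard deformation-theoretic descent of flatness along nilpotent thickenings does not apply directly. One must exploit $\kcirc$-flatness of $A$ together with topological finite presentation in an essential way — which is precisely what the Bosch--L\"utkebohmert--Raynaud machinery supplies, and which makes this the only step where admissibility is used in a more-than-cosmetic way.
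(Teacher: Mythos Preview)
Your proof is correct and follows the same two-step outline as the paper: first reduce from arbitrary ideals of definition to $f_\pi$, then compare $f_\pi$ with $f_s$. The paper, however, handles the second step more uniformly by direct citation. For smooth, \'etale and unramified it invokes \cite[${\rm IV}_4$, Propositions~17.8.1 and 17.8.2]{ega}, which already apply when the closed immersion $\Spec(\tilk)\hookrightarrow\Spec(\kcirc/\pi)$ is defined by a merely nil (not nilpotent) ideal, i.e.\ when the two schemes share the same underlying space. For flatness it invokes the fiberwise flatness criterion \cite[${\rm IV}_3$, Proposition~11.3.10]{ega}, which applies because $\gtX_\pi$ is flat over the one-point scheme $\Spec(\kcirc/\pi)$ --- and this flatness is nothing but the base change of the admissibility hypothesis along $\kcirc\to\kcirc/\pi$. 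So your concern that the nil-but-not-nilpotent kernel forces recourse to the Bosch--L\"utkebohmert--Raynaud machinery is overstated: the fiberwise criterion sidesteps that issue entirely, and admissibility enters only through the trivial observation that flatness is stable under base change. Your Nakayama argument for the unramified case and your decomposition of smooth/\'etale into flat-plus-fibres are both valid, but become unnecessary once one has the EGA references in hand.
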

\begin{proof}
Since ideals of the form $\pi\calO_\gtX$ with a pseudo-uniformizer $\pi$ are cofinal among all ideals of definitions, it suffices to show that $f_\pi=f\otimes_{\kcirc} \kcirc/\pi\kcirc$ is smooth (resp. \'etale, resp. flat, resp. unramified) if and only if $f_s=f_\pi\otimes\tilk$ satisfies the same property. This follows from an appropriate fiberwise criterion, see \cite[${\rm IV}_4$ Propositions~17.8.1 and 17.8.2]{ega} and \cite[${\rm IV}_3$ Propositions~11.3.10]{ega}
\end{proof}

\subsubsection{Semistable formal schemes}
For any $d\ge 0$ and a quasi-uniformizer $\pi$ set $$\gtS_{\pi,d}=\Spf\left(\kcirc\{t_0\.t_d\}/(t_0\ldots t_d-\pi)\right).$$ These are the standard (or model) semistable formal schemes, and in general one says that an admissible formal scheme $\gtX$ is {\em semistable} at a point $x$ (or $x$ is a semi-stable point) if for some \'etale neighborhood $\gtU$ of $x$ there exists a smooth morphism $f:\gtU\to\gtS_{\pi,d}$ called a {\em semistable chart} at $x$. If one can choose $\gtU$ to be a usual neighborhood, then $\gtX$ is {\em strictly semistable} at $x$.

\begin{rem}
Usually one requires $f$ to be \'etale, but this leads to an equivalent definition because a smooth chart factors through an \'etale morphism $\gtU\to\gtS_{\pi,d}\times\bfA^n_{\kcirc}$ and it is easy to see that the target locally admits \'etale morphisms to $\gtS_{\pi,d+n}$. This observation is (more or less) a particular case of Kato's observation that the chart criterion of log smoothness can use only \'etale charts.
\end{rem}

\subsubsection{Semistable parameters}
It will be convenient to work with charts that take a given point to the origin $O=V(t_0\.t_d)\in\gtS_{\pi,d}$, and this motivates the following terminology. Let $\gtX$ be an admissible formal $\kcirc$-scheme and $x\in\gtX$ a point. We say that $t_0\.t_m\in\cO_{\gtX,x}$ are {\em twisted semistable parameters} at $x$ if $t_0\dots t_m=u\pi$, where $\pi$ is a pseudo-uniformizer, $u\in\calO_{\gtX,x}^\times$ is a unit, and the subscheme $V=V_{\gtX_s}(t_0\.t_m)$ is smooth and of codimension $m$ at $x$. If $u=1$ then we say that $t_0\.t_m$ are {\em semistable parameters}. Naturally, we will usually work with semistable parameters, but the twisted notion will be occasionally used for technical reasons. Here are few simple but useful observations.

\begin{rem}\label{semirem}
(i) If $t_0\.t_m$ are semistable parameters and $u_0\.u_m\in\cO_{\gtX,x}^\times$ are such that $u_0\dots u_m=1$, then $t'_i=u_it_i$ form another family of semistable parameters.

(ii) If $t_0\.t_m$ are twisted semistable parameters, then replacing $t_0$ by $t_0/u$ one obtains a semistable family of parameters.

(iii) Assume that $\gtX\to\gtS_{\pi,d}$ is smooth at $x$ and given by $t'_0\.t'_d$ ordered in such a way that $t'_i(x)=0$ if and only if $i\le m$ for some $0\le m\le d$. Then locally at $x$ the elements $t_0=t'_0/(t'_{m+1}\dots t'_d)$ and $t_i=t'_i$ for $1\le i\le m$ form a semistable family and the induced morphism $\gtU\to\gtS_{\pi,m}$ is smooth at $x$. Indeed, we have just described a projection $\gtS_{\pi,d}\setminus V(t_{m+1}\dots t_d)\to\gtS_{\pi,m}$ whose smoothness is easily verified on the level of closed fibers.
\end{rem}

In fact, the last observation of part (iii) above is a general property which characterizes semistable parameters, but the proof is not completely obvious. We will prove it later as a particular case of Theorem~\ref{logsmth}, but prefer to formulate it now for expositional reasons. This does not cause to a circular reasoning because this lemma and its corollary will only be used in \S\ref{mainsec}.

\begin{lem}\label{semistlem}
Let $\gtX$ be a formal admissible $\kcirc$-scheme with a point $x\in\gtX$. Then $t_0\.t_m\in\calO_{\gtX,x}$ is a family of semistable parameters at $x$ if and only if locally at $x$ they induce a semistable chart $f\:\gtU\to\gtS_{\pi,m}$ which takes $x$ to the origin.
\end{lem}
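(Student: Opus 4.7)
The ``if'' direction is immediate: if $f\:\gtU\to\gtS_{\pi,m}$ is a smooth chart with $f(x)=O$, then setting $t_i:=f^*T_i$ for the coordinate functions on $\gtS_{\pi,m}$ gives $t_0\cdots t_m=f^*\pi=\pi$, and $V_{\gtU_s}(t_0\.t_m)=f_s^{-1}(O)$ is the preimage of the codimension-$m$ smooth $\tilk$-point $O\in\gtS_{\pi,m,s}$ under the smooth morphism $f_s$, hence smooth of codimension $m$ at $x$. Thus $t_0\.t_m$ form a family of semistable parameters.

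For the converse, suppose $t_0\.t_m\in\calO_{\gtU,x}$ are semistable parameters; set $V=V_{\gtU_s}(t_0\.t_m)$ and let $n=\dim_x V$. Since $t_0\cdots t_m=\pi$, the tuple defines a morphism $f\:\gtU'\to\gtS_{\pi,m}$ on some neighborhood $\gtU'$ of $x$, sending $x$ to $O$. The plan is to show $f$ is smooth at $x$, which by Lemma~\ref{simpleetale} reduces (via flatness) to smoothness of $f_s$ at $x$. The fibre $f_s^{-1}(O)=V$ is smooth over $\tilk$ at $x$ by hypothesis, so only flatness needs verification. For this I will apply Hironaka's miracle flatness theorem to the local homomorphism $\calO_{\gtS_{\pi,m},O}\to\calO_{\gtU,x}$ at the formal scheme level (flatness on the special fibre then follows by base change along $\pi$), which requires showing that both local rings are regular and that the dimension formula $\dim\calO_{\gtU,x}=\dim\calO_{\gtS_{\pi,m},O}+\dim\calO_{V,x}$ holds.

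Regularity of $\calO_{\gtS_{\pi,m},O}$ is a standard computation: its completion $\kcirc[[T_0\.T_m]]/(T_0\cdots T_m-\pi)$ is the quotient of a regular $(m+2)$-dimensional ring by the nonzero element $T_0\cdots T_m-\pi$, whose linear term modulo the square of the maximal ideal is $-\pi$, producing a regular ring of dimension $m+1$. For $\calO_{\gtU,x}$, I would pick elements $\tilde s_1\.\tilde s_n\in\calO_{\gtU,x}$ lifting a regular family of parameters of the regular local ring $\calO_{V,x}$; then $(t_0\.t_m,\tilde s_1\.\tilde s_n)$ generates the maximal ideal of $\calO_{\gtU,x}$, since $\pi=t_0\cdots t_m$ lies in the ideal they generate and, modulo $\pi$, they generate the maximal ideal of $\calO_{\gtU_s,x}$ by regularity of $V$. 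The embedding dimension of $\calO_{\gtU,x}$ is therefore at most $m+n+1$, while $\kcirc$-flatness of $\gtU$ yields $\dim\calO_{\gtU,x}=\dim\calO_{\gtU_s,x}+1=(m+n)+1$. So the embedding dimension equals the Krull dimension and $\calO_{\gtU,x}$ is regular of dimension $m+n+1$. The dimension formula $m+n+1=(m+1)+n$ is then immediate, and miracle flatness applies (the regular source being Cohen--Macaulay), giving the required flatness.

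The main technical point is the regularity of $\calO_{\gtU,x}$: it is the interplay of the semistable relation $\pi=t_0\cdots t_m$ with the smoothness of the fibre $V$ that forces the embedding dimension down to the Krull dimension. The other ingredients --- fibre smoothness from the hypothesis, the dimension bookkeeping, miracle flatness for regular targets, and the base-change step from formal to special fibre --- are routine.
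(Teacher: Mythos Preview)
Your argument is correct when $\kcirc$ is a discrete valuation ring, but the paper works over an arbitrary complete real-valued field $k$, and in the non-discrete case it breaks down at the core step. If the value group of $k$ is dense (e.g.\ $k=\bbC_p$), then $\kcirc$ is not noetherian and in fact $\kcirccirc=(\kcirccirc)^2$; hence $\pi$ lies in the square of the maximal ideal of $\kcirc[[T_0\.T_m]]$, your ``linear term $-\pi$'' computation fails, and neither $\calO_{\gtS_{\pi,m},O}$ nor $\calO_{\gtU,x}$ is noetherian, let alone regular. Miracle flatness is therefore unavailable, and the dimension identity $\dim\calO_{\gtU,x}=\dim\calO_{\gtU_s,x}+1$ has no content. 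Passing to the closed fiber does not help either, since $\gtS_{\pi,m,s}=\Spec(\tilk[T_0\.T_m]/(T_0\cdots T_m))$ is not regular at the origin.

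The paper circumvents this by deferring to Theorem~\ref{logsmth}, whose mechanism is genuinely different. One enlarges $f$ to $\psi=(\phi,t)\:\gtU\to\gtS_{\pi,m}\times\bfA^d_{\kcirc}$ using functions $t_1\.t_d$ giving an \'etale chart on $V$, checks directly that $\psi_s$ is strictly unramified at $x$, and then upgrades this to \'etaleness not via flatness criteria on the (non-noetherian) local rings but by passing to the \emph{analytic fibers} $\gtU_{\eta,x}\to(\gtS_{\pi,m}\times\bfA^d_{\kcirc})_{\eta,\psi(x)}$ of Lemma~\ref{fiberlem}. These are honest $k$-analytic spaces with a good dimension theory; the map is a closed immersion between reduced irreducible spaces of the same dimension, hence an isomorphism, and one concludes via Theorem~\ref{etaleth}. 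This replaces the role miracle flatness plays in your approach and is insensitive to whether $\kcirc$ is noetherian.
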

\begin{proof}
The definition of semistable parameters just says that a morphism $f$ exists and the fiber $V$ over the origin is smooth at $x$ and of correct codimension. This immediately implies the inverse implication, but proving that $f$ is smooth at $x$ requires an additional argument (in particular, taking the codimension into account). This implication is a particular case of Theorem~\ref{logsmth}.
\end{proof}

\begin{cor}\label{semistabledescent}
Let $\gtX$ be an admissible formal $\kcirc$-scheme, $K=\whka$ and $f_{\Kcirc}\:\gtX'_{\Kcirc}\to\gtX_{\Kcirc}$ an admissible blowing up. Then there exists a finite separable extension $l/k$ and an admissible formal blowing up $f_{\lcirc}\:\gtX'_{\lcirc}\to\gtX_{\lcirc}$ such that $f_{\Kcirc}$ is the pullback of $f_{\lcirc}$. Furthermore, $\gtX'_{\Kcirc}$ is strictly semistable at a point $x_K$ if and only if for a large enough finite separable $l'/l$ the formal model $\gtX'_{l'^\circ}$ is strictly semistable at the image $x_{l'}$ of $x_K$.
\end{cor}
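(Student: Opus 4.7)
The plan is to handle the two assertions separately: first, descending the admissible blowing up from $\Kcirc$ to some finite separable $\lcirc$, and then, with this in hand, descending strict semistability at the chosen point.

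For the first claim the strategy is approximation of coherent open ideals. The blowing up $f_{\Kcirc}$ is determined by a coherent open ideal $\cI\subset\cO_{\gtX_{\Kcirc}}$; locally on an affine $\Spf(A)\subset\gtX$, $\cI$ is finitely generated in $A_{\Kcirc}$ and contains some power $\pi^N$. Any element of $A_{\Kcirc}$ can be approximated modulo $\pi^{N+1}$ by an element of $A_{\lcirc}$ for some finite separable $l/k$ (only finitely many series coefficients contribute modulo $\pi^{N+1}$, and each lies in the closure of $\bigcup_l \lcirc$ inside $\Kcirc$). Replacing generators by such approximations does not change the ideal since $\pi^N\in\cI$. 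Enlarging $l$ to handle finitely many affine charts and their overlap data -- using the standard fact that finitely presented formal schemes over $\gtX_{\Kcirc}$ descend to $\gtX_{\lcirc}$ for some finite separable $l/k$ -- produces a global ideal $\cI'\subset\cO_{\gtX_{\lcirc}}$ with $\cI'\cdot\cO_{\gtX_{\Kcirc}}=\cI$, and the admissible blowing up along $\cI'$ is the desired $f_{\lcirc}$.

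For the second claim the ``if'' direction is immediate by base change of a semistable chart. For ``only if'', suppose $\gtX'_{\Kcirc}$ is strictly semistable at $x_K$: after shrinking there exist an affine open $\gtU_K\ni x_K$ and semistable parameters $t_0,\ldots,t_d\in\cO(\gtU_K)$ with $t_0\cdots t_d=\pi$ such that $V_{\gtU_{K,s}}(t_0,\ldots,t_d)$ is smooth of codimension $d+1$ at $x_K$. By the same approximation principle, after enlarging $l$ to some finite separable $l'/k$ the open $\gtU_K$ descends to an affine $\gtU_{l'}\ni x_{l'}$ in $\gtX'_{l'^\circ}$ and there exist $t_i'\in\cO(\gtU_{l'})$ with $t_i-t_i'\in\pi^N\cO(\gtU_K)$ for $N$ as large as desired. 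A short telescoping calculation gives $t_0'\cdots t_d'=\pi w$ with $w\equiv 1\pmod{\pi^{N-1}}$ in $\cO(\gtU_{l'})$; hence $w$ is a unit, and replacing $t_0'$ by $t_0'':=t_0'/w$ yields a genuine semistable tuple $(t_0'',t_1',\ldots,t_d')$ satisfying $t_0''t_1'\cdots t_d'=\pi$ over $l'^\circ$.

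By Lemma~\ref{semistlem} it suffices to check that $V_{\gtU_{l',s}}(t_0'',t_1',\ldots,t_d')$ is smooth of codimension $d+1$ at $x_{l'}$. Since $w\equiv 1\pmod{\pi}$, we have $t_0''\equiv t_0\pmod{\pi}$ and $t_i'\equiv t_i\pmod{\pi}$, so after base change to $\Kcirc$ this vanishing locus recovers $V_{\gtU_{K,s}}(t_0,\ldots,t_d)$, which is smooth of codimension $d+1$ at $x_K$ by hypothesis. Both smoothness and codimension are preserved under faithfully flat descent along the field extension $\tilde{l}'\to\tilde{K}$ on closed fibers, completing (b). The main obstacle I anticipate is the gluing step in (a): turning local approximations into a single global coherent ideal $\cI'$ on $\gtX_{\lcirc}$ without destroying the relation $\cI'\cdot\cO_{\gtX_{\Kcirc}}=\cI$ on overlaps requires either a general descent theorem for coherent sheaves over the tower $\{\lcirc\}$ or an explicit argument on the finitely many transition isomorphisms. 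Once (a) is in place, (b) is a routine combination of approximation with Lemmas~\ref{simpleetale} and~\ref{semistlem}.
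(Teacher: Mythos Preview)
Your proposal is correct and follows the same strategy as the paper: approximate generators of the open ideal to descend the blowing up, then approximate the semistable parameters, adjust so their product is exactly $\pi$, and conclude via Lemma~\ref{semistlem}. The paper's variant of (b) is marginally slicker: instead of approximating all of $t_0,\dots,t_m$ and dividing out a unit $w$, it approximates only $t_1,\dots,t_m$ (using that each $t_i$ divides $\pi$, so a sufficiently close $t'_i$ equals $u_it_i$ with $u_i$ a principal unit), sets $t'_0:=\pi/(t'_1\cdots t'_m)$, and checks by faithful flatness of $l'^\circ\to\Kcirc$ that this quotient already lives in $\cO_{\gtX'_{l'^\circ},x_{l'}}$; it then verifies smoothness of the chart morphism $\gtU\to\gtS_{\pi,m}$ directly by base change rather than rechecking the vanishing-locus criterion. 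Your gluing worry in (a) dissolves by the same mechanism: once local ideals $\cI'_\alpha\subset\cO_{\gtX_{\lcirc}}$ satisfy $\cI'_\alpha\cO_{\gtX_{\Kcirc}}=\cI$ on each chart, faithful flatness forces $\cI'_\alpha=\cI'_\beta$ on overlaps, so no separate descent theorem is needed. One small slip: with parameters $t_0,\dots,t_d$ the codimension of $V(t_0,\dots,t_d)$ in the closed fiber is $d$, not $d+1$, since the relation $t_0\cdots t_d=0$ on $\gtX_s$ makes one of the $d+1$ equations redundant.
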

\begin{proof}
The center $\calI_K\subseteq\calO_{\gtX_{\Kcirc}}=\calO_{\gtX}\wtimes_{\kcirc}\Kcirc$ of $f_{\Kcirc}$ is open, hence it is locally generated by a family of the form $\pi,t_1\.t_n$ with $0\neq \pi\in\kcirc$, and replacing each $t_i$ by $t_i+a_i\pi$ we can assume that they are contained in $\calO_{\gtX}\otimes_{\kcirc}(k^a)^\circ$, and hence also in some $\calO_{\gtX}\otimes_{\kcirc}\lcirc=\calO_{\gtX_{\lcirc}}$ with a large enough finite separable extension $l/k$. This proves that $\calI_K$ descends to an ideal $\calI_l\subseteq\calO_{\gtX_{\lcirc}}$ and the blowing up $f_{\lcirc}\:\gtX'_{\lcirc}\to\gtX_{\lcirc}$ along $\cI_l$ pulls back to $f_{\Kcirc}$. Moreover, this is true for any larger extension of $k$.

In the second part only the direct implication needs a proof, so assume that $\gtX'_{\Kcirc}$ is strictly semistable at $x_K$ and choose a family of semistable parameters $t_0\.t_m$ at $x_K$. Each $t_i$ divides $\pi$ and it follows that for any $t'_i$ close enough to $t_i$ we have that $t'_i=u_it_i$ with $u_i\in 1+\Kcirccirc\calO_{\gtX'_{\Kcirc},x_K}$ a principal unit (in fact, it suffices that $t_i-t'_i\in\pi\Kcirccirc\cO_{\gtX'_{\Kcirc},x_K}$). In particular, we can choose $t'_1\.t'_m$ as above so that $t'_i\in\calO_{\gtX'_{l'^\circ}}=\calO_{\gtX'_{\lcirc}}\otimes_{\lcirc}l'^\circ$ for an appropriate $l'/l$. By Remark~\ref{semirem}(i), $t'_0=\pi/(t'_1\dots t'_m)$ and $t'_1\.t'_m$ form another family of semistable parameters at $x_K$, and we will show that they also form such a family at $x_{l'}$ and hence the latter is a strictly semistable point by Lemma~\ref{semistlem}.

Since $t'_1\dots t'_m$ divides $\pi$ in $\calO_{\gtX'_{\Kcirc},x_K}$ and $\gtX'_{\Kcirc}\to\gtX_{l'^\circ}$ is faithfully flat, $t'_1\dots t'_m$ also divides $\pi$ in $\calO_{\gtX'_{l'^\circ},x_{l'}}$. The ratio coincides with $t'_0$ because by the admissibility assumption $\calO_{\gtX'_{\Kcirc},x_K}$ has no $\pi$-torsion. Hence $t'_0\.t'_m$ are defined at $x_{l'}$, and the morphism they define to $\Spf(l'^\circ\{t'_0\.t'_m\}/(t'_0\dots t'_m-\pi))$ is smooth at $x_{l'}$ because its base change is smooth at $x_K$. Thus, $\gtX'_{l'^\circ}$ is strictly semistable at $x_{l'}$.
\end{proof}

\subsubsection{Uniformizability}
We say that a $k$-analytic space is {\em rig-smooth} if it is smooth at any Zariski closed (or rigid) point. Note that the usual notion of smoothness in Berkovich geometry assumes also that $X$ is boundaryless, but there is a notion of quasi-smoothness which does not make this assumption. For strictly analytic spaces it is equivalent to rig-smoothness.

\begin{defin}
Assume that $X$ is a rig-smooth analytic space. A point $y\in X^\ad$ is {\em uniformizable} if for any formal model $\gtX$ there exists a finite separable extension $l/k$ and an admissible blowing up $\gtX'\to \gtX_l$ such that a preimage of $y$ in $X_l^\ad$ specializes to a semistable point of $\gtX'$.
\end{defin}

\begin{rem}
(i) It is known since the semistable reduction of Deligne-Mumford that even for curves in the general case one has to extend the ground field and one cannot expect smooth specializations to exist, so such uniformizability is the best one can hope for.

(ii) As in the case of varieties, we chose the formulation where a cofinal family of good models exists and local resolution is achieved by a blow up. The former is absolutely critical to have a reasonable notion. The later is a convenient in applications property establishing which should not cause serious additional difficulties.
\end{rem}

\subsubsection{Local uniformization conjecture}
Now we can formulate the analytic analogue of Zariski local uniformization.

\begin{conj}\label{locunifconj}
Assume that $X$ is a rig-smooth $k$-analytic space. Then any point $x\in X^\ad$ is uniformizable.
\end{conj}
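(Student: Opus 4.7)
The plan is to reduce Conjecture~\ref{locunifconj} by an induction-on-height argument adapted from the proof of Theorem~\ref{unifschemes} to two sub-problems, exactly as foreshadowed by Theorem~\ref{mainth}: (a) uniformizability of analytic points $x\in X$, and (b) log uniformizability of valuations on $\tilk$-varieties arising as closed fibers of formal models. The set-up is as follows. An adic point $y\in X^\ad$ over $x=r(y)\in X$ is encoded by a valuation $\lam_y$ on $\wHx$ trivial on $\tilk$; its height is finite by an analog of Lemma~\ref{finheight} (the local dimension on any formal model $\gtX$ bounds the length of specialization chains at the center), and this height serves as the inductive parameter. The base case $\lam_y$ trivial means $y=x$, so (a) applies directly.

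For the inductive step I would first invoke (a) to find, after a finite separable extension $l/k$ and an admissible blowing up, a formal model $\gtX$ of $X_l$ on which the image of $y$ at the analytic level, namely $x$, specializes to a semistable point $\bar x$. By Lemma~\ref{semistlem} one then has semistable parameters $t_0,\ldots,t_m$ at $\bar x$, so locally $\gtX_s$ is of toric shape and carries a canonical snc divisor structure inherited from $\gtS_{\pi,m}$. The valuation $\lam_y$ now restricts to a valuation on the integral component of $\gtX_s$ containing the center of $y$, and hypothesis (b) supplies a log desingularization of this component along $\lam_y$. The crucial step is to lift this desingularization to an admissible formal blowing up of $\gtX$ such that the new center of $y$ is again semistable; this is the formal counterpart of Lemma~\ref{keylem} packaged as the promised Lemma~\ref{toriclem}. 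As in the scheme case, the lift is constructed by blowing up an ideal of the form $(\ut,\pi^l)$ (or a variant incorporating the residue-field parameters output by (b)) and an explicit chart computation modeled on Lemma~\ref{strictlem} verifies that this produces twisted semistable parameters at the new center of $y$.

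The principal obstacle, flagged in the paper's overview, is the gap between log smoothness and strict semistability: an admissible blowing up of a semistable formal scheme is generically only log smooth, not semistable. To close this gap I would invoke Corollary~\ref{semistabledescent}, extending $l/k$ finite-separably so that the twisted semistable parameters produced by the lift descend to genuinely semistable ones; the unit adjustments of Remark~\ref{semirem}(i)--(ii) then convert the chart into a standard semistable chart at the new center of $y$, completing the induction step. A secondary difficulty is technical bookkeeping: the retraction $r\:X^\ad\to X$ is not a morphism of schemes, so a log desingularization of $\gtX_s$ must be translated via Raynaud's equivalence into an admissible ideal on $\gtX$ (and matched with the center prescribed by $\lam_y$), and the strict-transform computations of Lemma~\ref{strictlem} must be reworked for admissible formal $\kcirc$-schemes. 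These technicalities are what the introduction describes as ``similar to the proof of Theorem~\ref{unifschemes} but more technical.''
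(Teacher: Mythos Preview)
First, Conjecture~\ref{locunifconj} is not proved in the paper---it is open, and what the paper proves is the reduction Theorem~\ref{mainth}. Your proposal is really a sketch of that reduction, and its broad shape (uniformize the analytic generization via (a), log-resolve on the closed fiber via (b), then lift by a formal key lemma) matches the paper's. However, several concrete steps are wrong. The formal analogue of Lemma~\ref{keylem} is Lemma~\ref{formalkey}, not Lemma~\ref{toriclem}, and its blowing-up center is $(\ut,\us,a^2)$ with $a$ a lift of $w^l$, where $w$ cuts out the snc divisor $E\subset Y$ output by (b)---not $(\ut,\pi^l)$ as you propose. Choosing $a$ this way is the whole mechanism: it forces the strict transform of the stratum $Z=V(\kcirccirc,\ut,\us)$ to recover $Y$ (via Lemmas~\ref{annlem} and~\ref{strictlem}) and turns the regular parameters $v_i$ of $(Y,E)$ into the extra toric coordinates of the model $\gtT_{\pi,m,r,l}$. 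Blowing up $(\ut,\pi^l)$ accomplishes none of this. You also omit the analogues of Steps~2--3 in the proof of Theorem~\ref{mainth}, which use preliminary admissible blow-ups to extend the semistable parameters $\ut$ and the transverse parameters $\us$ from the generic point $\eta$ of $Y$ to the center $x$; without those extensions Lemma~\ref{formalkey} cannot be applied at $x$.

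Two further misidentifications. The passage from log smooth to semistable is not achieved by Corollary~\ref{semistabledescent} (that is a descent-from-$\whka$ statement and has nothing to do with untwisting parameters) but by the toric subdivision of Lemma~\ref{modiflem}, pulled back along the smooth chart $\gtU\to\gtT_{\pi,m,r,l}$ after adjoining a root of $\pi$. And the induction on the height of $\lambda_y$ that you set up is never actually used: the paper performs a single decomposition into the height-one generization $y\in X$ and a residual valuation $\lambda$ on the $\tilk$-variety $Y\subset\gtX_s$, which hypothesis (b) handles in one shot regardless of its height.
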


This is the version of local uniformization one usually tries to attack by studying the situation locally. Since for a finite Galois $l/k$ all preimages of a point of $X^\ad$ in $X^\ad_l$ are conjugate, the conjecture immediately implies the following less local version, which is formulated in terms of analytic spaces only.

\begin{conj}\label{locunifcor}
Assume that $X$ is a rig-smooth compact $k$-analytic space. Then there exists a finite extension $l/k$ and a finite covering $X_l=\cup_{i=1}^nX_i$ such that each $X_i$ possesses a semistable formal model $\gtX_i$ over $l$.
\end{conj}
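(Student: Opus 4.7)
The plan is to derive Conjecture~\ref{locunifcor} from Conjecture~\ref{locunifconj} by a compactness and Galois descent argument. Since $X$ is compact, it admits a formal model $\gtX$ by Raynaud's theorem. For each $y\in X^\ad$, apply Conjecture~\ref{locunifconj} to obtain a finite separable extension $l_y/k$, which after enlarging we may assume Galois, and an admissible blowing up $\gtX'_y\to\gtX_{l_y^\circ}$ such that some preimage $\tilde y$ of $y$ in $X_{l_y}^\ad$ specializes to a semistable point of $\gtX'_y$. The semistable locus $\gtU_y\subseteq\gtX'_y$ is open, so its generic fibre $V_y:=(\gtU_y)_\eta$ is an analytic domain in $X_{l_y}$ carrying the semistable formal model $\gtU_y$, and $V_y^\ad$ is an open neighbourhood of $\tilde y$ in $X_{l_y}^\ad$.

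Next I would symmetrize under the Galois action. Setting $G_y:=\mathrm{Gal}(l_y/k)$, the open subset $W_y^\ad:=\bigcup_{\sigma\in G_y}\sigma(V_y^\ad)$ is $G_y$-invariant, and by Galois conjugacy of the fibres it contains every preimage of $y$ in $X_{l_y}^\ad$; each translate $\sigma(V_y)$ still carries the semistable model $\sigma(\gtU_y)$. Since $p_{l_y}\:X_{l_y}^\ad\to X^\ad$ is a finite Galois quotient (in particular open on Galois-invariant opens), the image $\Omega_y$ of $W_y^\ad$ is an open neighbourhood of $y$ in $X^\ad$. Quasi-compactness of $X^\ad$ then yields a finite subcover $X^\ad=\Omega_{y_1}\cup\dots\cup\Omega_{y_m}$; let $l/k$ be a finite separable Galois extension containing all the $l_{y_j}$.

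Finally I would take the required covering of $X_l$ to be the family of analytic domains $X_{j,\sigma}:=\sigma(V_{y_j})\wtimes_{l_{y_j}}l$ for $j=1,\dots,m$ and $\sigma\in G_{y_j}$, each equipped with the formal model $\sigma(\gtU_{y_j})\wtimes_{l_{y_j}^\circ}l^\circ$. Semistability of these models is preserved because $\gtS_{\pi,d}$ base-changes to the analogous standard form over $l^\circ$ (any quasi-uniformizer of $k$ remains one in $l$) and smoothness of the semistable chart is stable under flat base change. For the covering property, let $z\in X_l^\ad$ project to $y\in X^\ad$ and choose $j$ with $y\in\Omega_{y_j}$; the image $z'$ of $z$ in $X_{l_{y_j}}^\ad$ is a preimage of $y$, hence lies in $W_{y_j}^\ad$ and therefore in some $\sigma(V_{y_j})^\ad$, so $z\in X_{j,\sigma}^\ad$. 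The argument is essentially formal; the only technical points worth checking are the invariance of semistability under ground-field extension and the Galois conjugacy of fibres of $p_{l_y}$, which the author has explicitly flagged as the mechanism making the implication ``immediate''.
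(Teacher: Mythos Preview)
Your argument is correct and is precisely the approach the paper has in mind: the paper does not spell out a proof but merely records that ``since for a finite Galois $l/k$ all preimages of a point of $X^\ad$ in $X^\ad_l$ are conjugate, the conjecture immediately implies'' Conjecture~\ref{locunifcor}. You have faithfully unpacked this one-line remark---Galois symmetrization to cover all preimages, quasi-compactness of $X^\ad$ to extract a finite subcover, and stability of semistability under the final ground-field extension---and the technical checks you flag (conjugacy of fibers, preservation of semistable charts under flat base change) are exactly the ingredients needed.
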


This is the finiteness (or admissibility) of the covering condition which replaces uniformization of adic non-analytic points in the local version of the conjecture.

It is important to consider arbitrary ground fields $k$ in the applications, so the conjectures are formulated up to a ground filed extension. However, it immediately follows from Corollary~\ref{semistabledescent} that it suffices to prove them over an algebraically closed $k$, when no ground field extension is needed. In fact we even have the following slightly more precise property.

\begin{lem}\label{unifdescent}
Assume that $X$ is a rig-smooth compact $k$-analytic space, $x\in X^\ad$ a point and $y$ a preimage in $X^\ad_{\whka}$. Then $x$ is uniformizable if and only if $y$ is uniformizable.
\end{lem}

\begin{rem}\label{locunifrem}
(0) The conjecture is known in characteristic zero, because (using Elkik's theorem) it can be deduced from the global resolution provided by \cite[Theorem~1.2.19(2)]{ATW-relative}. In addition, it can be proved much easier along the lines of \cite{temaltered}, but this was never worked out in detail.

(1) The conjecture is wide open when $d=\dim(X)>1$ and $\cha(\tilk)>0$. It seems plausible that the difficulty of proving this conjecture for some $d$ is the same or a bit easier than the difficulty of proving the local uniformization conjecture of varieties in dimension $d+1$.

(2) For $d=1$ local uniformization implies semistable reduction of curves rather straightforwardly. Probably, the case of $d=2$ will lead to a proof of the global resolution of admissible formal surfaces, though the reduction will be difficult.

(3) So far, the only known result in any dimension is the weaker version, which only asserts existence of an \'etale cover $X'\to X_l$ such that $X'$ possesses a semistable model over $l$, see \cite[Theorem~3.4.1]{temaltered}. To some extent this weakening is analogous to de Jong's resolution by a separable alteration.
\end{rem}

\subsection{Log smooth formal $\kcirc$-schemes}
In the sequel, we will need to work with certain log smooth formal schemes which are not semistable. Since this does not make the arguments essentially more difficult, we decided to consider general log smooth formal schemes and use this occasion to also formulate the global counterpart of the local uniformization conjecture.

\subsubsection{On log structures on admissible formal schemes}\label{modelsec}
There are two natural approaches to provide admissible formal schemes with log structures. The first one is to work with fine or fs log structures, as in the classical algebraic setting. It is technically easier, but depends on some choices, including the choice of a sufficiently large fine log structure on $\gtS=\Spf(\kcirc)$. This is the approach we choose in this paper. In particular, our model blocks will be of the form $\gtS_P\{Q\}:=\Spf(\kcirc_P\{Q\})$ with fine $P\subseteq Q$.

\begin{rem}
For the sake of completeness we indicate that another approach consists in providing an admissible formal scheme $\gtX$ with the canonical log structure $M_\gtX$ which associates to an open $\gtU$ the monoid of all functions $f\in\Gamma(\calO_\gtU)$ such that the ideal $f\calO_\gtU$ is open. In other words, $M_\gtX$ is the monoid of functions which become units on the generic fiber $\gtX_\eta$, so informally $M_\gtX=i_*(\calO^\times_{\gtX_\eta})\cap\calO_\gtX$, where $i$ is the ``embedding'' $\gtX_\eta\into\gtX$ (this does make a formal sense in adic geometry). Moreover, one can also consider log structures which are finitely generated over the canonical ones, thus allowing non-trivial fine log structures on the generic fibers. The canonical log structure is not fine even for $\gtS$ (unless $\kcirc$ is a DVR) and the building blocks are completions of toric $\kcirc$-schemes defined in \cite{Gubler-Soto}. In fact, many questions about canonical log structures can be reduced to the fine case by approximating by fine substructures, but instead of working this out we prefer to work with fine log structures right ahead.
\end{rem}

\subsubsection{Model formal schemes}
Assume that $\lam\:P\into Q$ is an injective local homomorphism of fine monoids and $\pi\:P\into\kcirc\setminus\{0\}$ an embedding which will be written exponentially: $p\mapsto\pi^p$. For example, giving such a homomorphism with $P=\NN$ is equivalent to choosing a pseudo-uniformizer $\pi$. In this situation we use the usual notation $\kcirc_P[Q]=\kcirc\otimes_{\bbZ[P]}\bbZ[Q]$ and also denote its $\varpi$-adic completion (where $\varpi$ is a pseudo-uniformizer) by $\kcirc_P\{Q\}$. The homomorphism $Q\to\kcirc_P\{Q\}$ will be denoted $q\mapsto u^q$.

In this paper, a {\em model formal scheme} is $\gtS_P\{Q\}:=\Spf(\kcirc_P\{Q\})$, where $\lam\:P\into Q$, $\pi\:P\into\kcirc$ are as above and, in addition, the torsion of $Q^\gp/P^\gp$ is of order invertible in $\tilk$ and $\lam$ does not factor through a facet of $Q$. The latter condition is equivalent to the following ones: a) The image of $P_\bfR$ contains a point in the interior of $Q_\bfR$, b) inverting the elements of $P$ one inverts $Q$, that is, $Q+P^\gp=Q^\gp$, c) for any $q\in Q$ there exists $q'\in Q$ such that $q+q'\in\lam(P)$. This condition is satisfied if and only if each $q\in Q$ gives rise to an open ideal $u^q\calO_{\gtS_P\{Q\}}$.

\subsubsection{Examples}\label{twoexam}
The most standard example of a model formal scheme is $\gtS_{\pi,m}$ -- the semistable one. It corresponds to the embedding $\pi\:P=\NN\into\kcirc$, the monoid $Q=\NN^{m+1}$ with generators $e_i$ mapped by $u$ to $t_i$ and the homomorphism given by $\lam(1)=e_0+\dots+e_m$. More general semistable charts $\gtS_{\pi,m}\times\bbG_m^n$ are obtained for $Q=\NN^{m+1}\oplus\ZZ^n$ and the same relation with $P$. They are needed to construct \'etale charts rather than smooth ones, but we will not use them in the paper.

More generally, a standard polystable model $\gtS=\prod_{i=1}^r\gtS_i$, which is a product of standard semistable models with monoids $Q_i$ and $P_i$, corresponds to the monoids $Q=\oplus_{i=1}^rQ_i$ and $P=\sum_{i=1}^rP_i\subset\kcirc$. A formal $\kcirc$-scheme $\gtX$ is {\em strictly polystable} (resp. {\em polystable}) at $x$ if locally  (resp. \'etale locally) at $x$ it possess a smooth morphism to a standard polystable model.

Finally, we will also need the case of certain specific monoids that arise when one blows up semistable models. Consider the semistable scheme
$$\gtX=\Spf\left(\kcirc\{t_0\.t_m,v_1\.v_r\}/(t_0\ldots t_m-\pi)\right),$$ let $v=v_1^l\dots v_r^l$ for some $l>0$ and let $\gtT_{\pi,m,r,l}$ be the $v$-chart of the blowing up of $\gtX$ along $(t_0\.t_{m},v)$. We refer to \cite[Section 2]{BL} for basics on admissible blowings up (a generalization to arbitrary formal blowings up can be found in \cite[p. 495]{temdes}).

\begin{lem}\label{toriclem}
Keep the above notation. Then $$\gtT_{\pi,m,r,l}=\Spf(\kcirc\{t'_0\.t'_{m},v_1\.v_r\}/(t'_0\dots t'_mv_1^d\dots v_r^d-\pi)),$$ where $d=(m+1)l$. In particular, $\gtT_{\pi,m,r,l}=\gtS_P\{Q\}$, where $P=\NN$ is embedded into $\kcirc$ via $\pi$, $Q=\NN^{m+r+1}$ with generators $e_0\.e_{m+r}$ mapped by $u$ to $\ut',\uv$ and $\lam$ is given by $\lam(1)=e_0+\dots+e_m+d(e_{m+n+1}+\dots+e_{m+r})$.
\end{lem}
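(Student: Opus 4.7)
The plan is to compute the $v$-chart directly from the standard affine description of a blowing up (Lemma~\ref{strictlem}(0) or \cite[Section 2]{BL}), eliminate the variables $t_0\.t_m$ via the natural substitution, and then recognize the resulting formal spectrum as the toric model $\gtS_P\{Q\}$ attached to the stated monoids $P\subseteq Q$. The calculation is routine; the only conceptual point is to check that the naive algebra that comes out is already admissible, so that no $v$-torsion needs to be killed.

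Let me spell out the steps. Write $A=\kcirc\{t_0\.t_m,v_1\.v_r\}/(t_0\dots t_m-\pi)$ and $J=(t_0\.t_m,v)$ with $v=v_1^l\dots v_r^l$. By the chart description, the $v$-chart of $\Bl_J(\Spf A)$ is $\Spf$ of the $\pi$-adic completion of the subring $A[J/v]=A[t'_0\.t'_m]\subseteq A[1/v]$, with $t'_i=t_i/v$. Presenting this as $A[t'_0\.t'_m]/(vt'_i-t_i)$ and using the relations to eliminate each $t_i$, one obtains
\[
\kcirc\{v_1\.v_r\}[t'_0\.t'_m]\big/\bigl(v^{m+1}t'_0\dots t'_m-\pi\bigr)=\kcirc[\ut',\uv]\big/\bigl(t'_0\dots t'_m\,v_1^d\dots v_r^d-\pi\bigr),
\]
with $d=(m+1)l$. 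Completing $\pi$-adically gives the first equality of the lemma. This quotient is $\pi$-torsion free (as $t'_0\dots t'_mv_1^d\dots v_r^d$ is a non--zero divisor in the free topological $\kcirc$-algebra $\kcirc\{\ut',\uv\}$), so the ring is admissible and no further saturation or elimination of $v$-torsion is necessary.

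For the second assertion, take $P=\NN$ with $1\mapsto\pi$, $Q=\NN^{m+r+1}$ with standard generators $e_0\.e_{m+r}$, and $\lam(1)=e_0+\dots+e_m+d(e_{m+1}+\dots+e_{m+r})$. Then $\bbZ[P]\to\bbZ[Q]$ sends the generator of $P$ to the monomial $x_0\dots x_m\,x_{m+1}^d\dots x_{m+r}^d$, so that
\[
\kcirc_P\{Q\}=\kcirc\{x_0\.x_{m+r}\}\big/\bigl(x_0\dots x_m\,x_{m+1}^d\dots x_{m+r}^d-\pi\bigr),
\]
which matches our ring under $x_i\mapsto t'_i$ ($i\le m$) and $x_{m+j}\mapsto v_j$, with $u^{e_i}$ becoming the corresponding coordinate.

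It remains to verify that $(P,Q,\lam)$ satisfies the conditions imposed on a model formal scheme in \S\ref{modelsec}. The vector $\lam(1)\in\ZZ^{m+r+1}$ has a coordinate equal to $1$, so it is primitive and $Q^\gp/P^\gp\cong\ZZ^{m+r}$ is torsion-free (in particular its torsion has order invertible in $\tilk$). All coordinates of $\lam(1)$ are strictly positive, so $\lam(1)_\bfR$ lies in the interior of $Q_\bfR=\bfR_{\ge 0}^{m+r+1}$; equivalently, $\lam$ does not factor through any facet. Hence $\gtS_P\{Q\}$ is a legitimate model formal scheme and coincides with $\gtT_{\pi,m,r,l}$, as claimed.
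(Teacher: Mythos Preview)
Your argument is correct and follows the same route as the paper's: pass to the algebraic blow-up, present the $v$-chart as $A[t'_0\.t'_m]/(vt'_i-t_i)$, eliminate the $t_i$ to get $\kcirc[\ut',\uv]/(f)$ with $f=t'_0\dots t'_m v_1^d\dots v_r^d-\pi$, and then check that no torsion has to be removed. The paper phrases the last step as triviality of $v$-torsion (``the free term $\pi$ in $f$'' forces $vg\in(f)\Rightarrow g\in(f)$), while you check $\pi$-torsion freeness and infer $v$-torsion freeness from $v\mid\pi$ in the quotient; just note that your parenthetical reason is not quite enough as stated --- what makes the argument go through is that the monomial $M=t'_0\dots t'_m v_1^d\dots v_r^d$ stays a non--zero divisor \emph{after reducing modulo $\pi$}, not merely in $\kcirc\{\ut',\uv\}$ itself.
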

\begin{proof}
The formal blowing up is obtained by completing the usual blowing up of the corresponding affine scheme $X=\Spec(A)$, where $$A=\kcirc[t_0\.t_{m},v_1\.v_r]/(t_0\dots t_m-\pi),$$ hence it suffices to prove the analogous claim for the blowing up of $X$ along $(t_0\.t_{m},v)$. The usual description of blowings up implies that the $v$-chart equals $\Spec(A')$, where $A'$ is the quotient of $$A'':=A[t'_0\.t'_{m}]/(vt'_0-t_0\.vt'_{m}-t_{m})$$ by the $v$-torsion. Set $f=t'_0\dots t'_mu_1^d\dots u_r^d-\pi$, then $$A''=\kcirc[t'_0\.t'_{m},v_1\.v_r]/(f)$$ and the presence of the free term $\pi$ in $f$ implies that if $vg\in(f)$, then $g\in (f)$. Therefore the $v$-torsion is trivial and the chart is $\Spec(A'')$ as asserted.
\end{proof}

\subsubsection{Semistable modification}
The (non-normal) toric formal $\kcirc$-schemes $\gtT_{\pi,m,r,l}$ can be modified to semistable ones by usual combinatorial tools.

\begin{lem}\label{modiflem}
Let $\gtT=\gtS_P\{Q\}$ be a model formal $\kcirc$-scheme and assume that $P=\NN$ is mapped to $\kcirc$ via a pseudo-uniformizer $\pi$ and $Q^\gp/P^\gp$ is torsion free. Then there exists an extension $l=k(\pi^{1/d})$ and a blowing up $\gtT'\to\gtT\otimes_{\kcirc}\lcirc$ such that $\gtT'$ is strictly semistable over $\lcirc$.
\end{lem}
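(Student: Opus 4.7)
The plan is to translate the assertion into toric combinatorics and then invoke the classical semistable reduction theorem for toric varieties. Set $M=Q^\gp$ and $N=\operatorname{Hom}(M,\ZZ)$, and let $\sigma\subset N_\bfR$ be the rational polyhedral cone dual to $Q$, so $Q=\sigma^\vee\cap M$. The homomorphism $\lam$ is recorded by the element $\phi:=\lam(1)\in Q\subset M$, which we view as an integer linear form on $N$. The hypotheses of the lemma translate respectively to $\phi$ being primitive in $M$ (from $Q^\gp/P^\gp$ torsion free) and $\phi$ being strictly positive on the interior of $\sigma$ (from the non-facet assumption built into the definition of a model formal scheme). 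Toric admissible blowings up of $\gtT$ correspond to coherent rational subdivisions $\Sigma$ of $\sigma$, and the resulting model scheme is strictly semistable over $\kcirc$ if and only if $\Sigma$ is regular simplicial and every primitive ray generator $\rho$ of $\Sigma$ in $N$ satisfies $\phi(\rho)\in\{0,1\}$---this is read directly off the chart description, which produces pieces of the form $\gtS_{\pi,m}\times\bbG_m^{n-m-1}$.

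The base change to $l=k(\pi^{1/d})$ with $\pi'=\pi^{1/d}$ is equally transparent on the combinatorial side: $\gtT\otimes_\kcirc\lcirc$ is again a model formal scheme, corresponding to the same cone $\sigma$ but now in the index-$d$ sublattice $N_d:=\{n\in N:d\mid\phi(n)\}\subset N$, with $\phi$ replaced by its primitive refinement $\phi/d\in(N_d)^\vee$ (here we use that $\phi$ is primitive). The lemma thus reduces to the purely combinatorial claim that there exist an integer $d\ge 1$ and a coherent regular simplicial subdivision $\Sigma$ of $\sigma$ with respect to $N_d$ such that every primitive ray generator $\rho$ of $\Sigma$ in $N_d$ satisfies $\phi(\rho)\in\{0,d\}$.

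To construct such a pair $(d,\Sigma)$, I would start from any coherent regular simplicial subdivision $\Sigma_0$ of $\sigma$ in $N$, which is available by standard toric desingularization via iterated star subdivisions of non-regular cones. Letting $d_0$ be the least common multiple of the positive values $\phi(\rho)$ as $\rho$ runs over the primitive ray generators of $\Sigma_0$, the rays of $\Sigma_0$ acquire new primitive generators in $N_{d_0}$ with $\phi$-values in $\{0,d_0\}$; however, the cones of $\Sigma_0$ may no longer be regular over $N_{d_0}$, and subsequent refinements can introduce new rays with intermediate $\phi$-values. The classical way to terminate this process, going back to KKMS (Chapter III), is to consider the slice $\sigma\cap\{\phi=d\}$: modulo the face of $\sigma$ on which $\phi$ vanishes it is a rational polytope in $N_d$, and after enlarging $d_0$ to a suitable multiple $d$ this dilate admits a coherent unimodular lattice triangulation by the Knudsen--Mumford theorem; coning off that triangulation (together with a regular subdivision of the $\phi=0$ face) yields the required fan $\Sigma$. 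The main technical obstacle is precisely the existence of such a coherent unimodular triangulation after dilation, but this is classical. Once $\Sigma$ is in hand, the associated toric admissible blowing up of $\gtT\otimes_\kcirc\lcirc$ provides the desired $\gtT'$, whose toric affine charts are by construction of the form $\gtS_{\pi',m}\times\bbG_m^{n-m-1}$, so $\gtT'$ is strictly semistable over $\lcirc$.
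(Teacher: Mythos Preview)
Your approach is essentially the paper's: reduce to toric combinatorics and invoke the KKMS theorem on projective unimodular subdivisions of the slice polytope after a suitable dilation $d$. Two small points: writing $Q=\sigma^\vee\cap M$ presumes $Q$ is saturated, which the paper secures as a first step by replacing $\gtT$ with $\Spf(\kcirc_P\{Q^\sat\})$ (an admissible blowing up); and the non-facet condition in the definition of a model formal scheme says $\phi=\lam(1)$ lies in the \emph{interior} of $Q_\bfR$, which dually means $\phi>0$ on all of $\sigma\setminus\{0\}$, not just on its interior, so the slice $\sigma\cap\{\phi=d\}$ is already a compact polytope and your separate treatment of a ``$\phi=0$ face'' is unnecessary.
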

\begin{proof}
Replacing $\gtT$ by its finite modification $\Spf(\kcirc_P\{Q^\sat\})$ (which is automatically an admissible blowing up) we reduce the claim to the classical case when $Q$ is a toric monoid, and then the argument is standard: first we solve an analogous problem for the $\ZZ[\pi]$-scheme $\Spec(\ZZ[Q])$ and then pull back this solution to a modification of $\gtT\otimes_{\kcirc}\lcirc$ for an appropriate $d$. The main result of \cite{KKMS} provides a solution of the later problem over a field, but the solution over $\ZZ$ is the same: consider the polyhedral cone $\sigma=(Q_\bfR)^{\rm v}$, cut it with the hyperplane $\pi=1$ obtaining a polytope with vertexes in the lattice $(Q^\gp)^{\rm v}$, find $d$ and a unimodular projective subdivision of $\sigma$ in the lattice $d^{-1}(Q^\gp)^{\rm v}$ via the main combinatorial result of \cite{KKMS}, consider the corresponding toric modification of $\Spec(\ZZ[Q]\otimes_{\ZZ[\pi]}\ZZ[\pi^{1/d}])$.
\end{proof}

\begin{rem}
If $P$ is of a larger rank, then a semistable subdivision of $\gtS_P\{Q\}$ in general does not exist, but one can find a polystable subdivision. Essentially, this is the main result of \cite{ALPT}.
\end{rem}

\subsubsection{Log smoothness}
An admissible formal $\kcirc$-scheme is {\em log smooth} (resp. Zariski log smooth) if \'etale locally (resp. locally) it possesses a smooth morphism $f\:\gtU\to\gtS_P\{Q\}$ to a model formal scheme. One can require that $f$ is \'etale, but this leads to the same notion. Note that giving a morphism $f$ is equivalent to giving a $P$-homomorphism of monoids $Q\to\calO_\gtU$, and we call the latter a {\em monoidal chart}. It induces a log structure on $\gtU$ which makes $\gtU$ into a log scheme which is log smooth over $\gtS$ provided with the log structure induced by $P$. Our assumption that $Q^\gp=Q+P^\gp$ just means that the induced log structure on $\gtX_\eta$ is trivial (or the log structure is vertical).

\subsubsection{Log smooth modification conjecture}
Now we can formulate the main global conjecture about existence of nice formal models.

\begin{conj}\label{logsmoothconj}
Assume $k$ is a complete real-valued field and $\gtX$ is an admissible formal $\kcirc$-scheme whose generic fiber $\gtX_\eta$ is rig-smooth. Then there exists a finite extension $l/k$ and an admissible blowing up $\gtX'\to\gtX_l$ such that $\gtX'$ is polystable.
\end{conj}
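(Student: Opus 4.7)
The plan is to reduce the global conjecture to the local uniformization Conjecture~\ref{locunifconj} by combining a quasi-compactness argument with combinatorial toric modifications, in direct analogy with how Theorem~\ref{unifschemes} reduces global log uniformization of schemes to the height-one case. Assuming for simplicity that $\gtX$ is quasi-compact (one then patches), I would first base change to $K=\whka$ to avoid tracking a finite extension, and descend via Corollary~\ref{semistabledescent} at the end. The guiding observation is that polystability is an open condition on the special fiber (it is witnessed by a smooth morphism to a standard polystable model on an \'etale neighborhood), so it suffices to produce an admissible blowing up $\gtX' \to \gtX_{\Kcirc}$ polystable at each point, and then use openness together with quasi-compactness of $\gtX^{\ad}_{\Kcirc}$ to get a genuinely global conclusion.

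Concretely, for each $y \in \gtX^{\ad}_{\Kcirc}$ the local conjecture furnishes an admissible blowing up $\gtX'_y \to \gtX_{\Kcirc}$ such that a preimage of $y$ specializes to a semistable point $x_y \in \gtX'_y$. By openness of semistability (Lemma~\ref{simpleetale} plus the chart form of Lemma~\ref{semistlem}), the preimage of the semistable locus around $x_y$ under the specialization map is an open neighborhood of $y$ in $\gtX^{\ad}_{\Kcirc}$, so by quasi-compactness finitely many blowings up $\gtX'_{y_1},\dots,\gtX'_{y_n}$ already suffice, in the sense that the polystable loci of their specialization maps jointly cover $\gtX^{\ad}_{\Kcirc}$.

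Next I would form a common refinement $\gtX'' \to \gtX_{\Kcirc}$ dominating each $\gtX'_{y_i}$, which exists because admissible blowings up are cofiltered up to further admissible blowing up in Raynaud's formalism. At every point of $\gtX''$ the map $\gtX'' \to \gtX'_{y_i}$ is locally a further admissible blowing up of a semistable formal scheme, which is where the key difficulty arises: further blowing up of a semistable scheme need not be semistable, and in general only produces log smooth schemes of the toric type $\gtT_{\pi,m,r,l}$ from Lemma~\ref{toriclem}. Using an analogue of Lemma~\ref{modiflem} applied stratum by stratum, together with the polyhedral subdivision techniques of \cite{ALPT}, I would produce a further admissible blowing up $\gtX''' \to \gtX''$ (possibly after enlarging the field by a root of $\pi$) that is polystable everywhere. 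Finally, descend via Corollary~\ref{semistabledescent} to a sufficiently large finite separable extension $l/k$.

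The principal obstacle is by a wide margin the input hypothesis Conjecture~\ref{locunifconj} itself, which is open in positive residue characteristic beyond the case of curves; see Remark~\ref{locunifrem}. Granting that, the main technical difficulty is the patching step: the centers one must blow up in different local charts need not be compatible, and a naive common refinement loses semistability and even log smoothness. I would expect to handle this by first restricting attention to blowings up whose centers are invariant under the local log structures of the $\gtX'_{y_i}$, then choosing a consistent projective subdivision of the global cone complex of log strata, in the spirit of the simultaneous polystable resolution of \cite{ALPT}. A secondary subtlety is that polystability of $\gtX'''$ at a point depends only on the local specialization datum, so one must verify that the openness step genuinely propagates through the two successive refinements $\gtX'' \to \gtX_{\Kcirc}$ and $\gtX''' \to \gtX''$; this is essentially automatic once the centers are chosen stratum-compatibly.
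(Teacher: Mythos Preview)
The statement you are attempting to prove is not a theorem in the paper: it is Conjecture~\ref{logsmoothconj}, stated as an open problem with no proof. There is therefore nothing in the paper to compare your argument against. The paper's contribution in this direction is orthogonal to yours: Theorem~\ref{mainth} reduces the \emph{local} uniformization conjecture (Conjecture~\ref{locunifconj}) for adic points to the case of analytic points plus log uniformization of $\tilk$-varieties, but it does not attempt any local-to-global patching of the kind you sketch. Indeed, Remark~\ref{locunifrem}(2) explicitly says that even in dimension~$2$ the passage from local uniformization to the global modification conjecture ``will be difficult,'' and the introduction lists it as future work.

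Your proposal is thus not a proof but a conditional reduction of one open conjecture to another, and you correctly flag both obstacles. The more serious of the two is the patching step: when you dominate finitely many local semistable blowings up $\gtX'_{y_i}$ by a common refinement $\gtX''$, there is no reason for $\gtX''$ to be log smooth at every point, since the centers are unrelated and an admissible blowing up of a semistable scheme along an arbitrary open ideal need not be of the toric type $\gtT_{\pi,m,r,l}$ handled by Lemma~\ref{toriclem}. Your suggestion to restrict to log-invariant centers and invoke \cite{ALPT} is in the right spirit, but as stated it is circular: you would need the centers coming from Conjecture~\ref{locunifconj} to already be monomial with respect to a global log structure, which that conjecture does not provide. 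Producing such compatible centers is precisely the hard content of a local-to-global theorem, and is not addressed by any result in the paper.
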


\begin{rem}
(0) Classically one formulates a weaker conjecture about existence of a single semistable (or log smooth) model of a rig-smooth $X=\gtX_\eta$, and it is called a semistable (or log smooth) reduction conjecture. The modification conjecture essentially asserts that such models form a cofinal family.

(i) It suffices to prove that after a ground field extension $l/k$ there exists a blowing up $\gtX'$ of $\gtX_l$ which is log smooth, because by the main result of \cite{ALPT} such $\gtX'$ can be refined by a log blowing up to a polystable $\gtX''$. If $|k^\times|$ is of rank one, then $\gtX'$ even admits a semistable log blowing up.

(ii) Assume that $\cha(\tilk)=0$. If the valuation is discrete, then the semistable reduction theorem is the famous classical result proved in \cite{KKMS}. The log smooth modification theorem without restrictions on $k$ was established very recently in \cite[Theorem~1.2.19]{ATW-relative} by new resolution techniques.

(iii) If $\dim(\gtX_\eta)=1$, then the analytic semistable reduction theorem is due to Bosch-L\"utkebohmert, see \cite{BL} and the semistable modification follows easily (an alternative proof of semistable modification can be found in \cite{temst}). Nearly nothing is known when $\cha(\tilk)>0$ and $\dim(\gtX_\eta)>1$.
\end{rem}

\subsection{A local criterion of log smoothness}

\subsubsection{Strictly unramified morphisms}
We say that a morphism of schemes $f\:Y\to X$ is {\em strictly unramified} (resp. {\em strictly \'etale}) at $y\in Y$ if it is unramified (resp. \'etale) at $y$ and $k(f(y))=k(y)$. This is a relatively standard terminology for \'etale morphisms, but perhaps not for the unramified ones. A local homomorphism of local rings is called strictly unramified (resp. strictly \'etale) if the induced morphism of spectra is strictly unramified (resp. strictly \'etale) at the closed point of the source. Also, we say that a ring $A$ is {\em generalized Artin} if $\Spec(A)$ is a point. Clearly, this happens if and only if $A$ is a local ring and $m_A$ is the nilradical of $A$.

\begin{lem}\label{artinlem}
Let $\phi\:A\to B$ be a homomorphism of generalized Artin rings, then

(i) $\phi$ is finite if and only if $B$ is a finitely generated $A$-algebra.

(ii) $\phi$ is strictly unramified (resp. strictly \'etale) if and only if $\phi$ is surjective (resp. an isomorphism).
\end{lem}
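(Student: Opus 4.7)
The plan is to exploit the structure of generalized Artin rings (local, with maximal ideal equal to the nilradical) together with Nakayama's lemma. For (i), only the backward direction requires work. Since $\phi$ is local and every element of $m_A$ is nilpotent, $\phi(m_A)\subseteq m_B$, so $\phi$ induces an embedding of residue fields $\kappa_A:=A/m_A\hookrightarrow\kappa_B:=B/m_B$. Writing $B=A[b_1,\dots,b_n]$, the images $\bar b_i$ generate $\kappa_B$ as a $\kappa_A$-algebra; since $\kappa_B$ is a field, Zariski's lemma gives that $\kappa_B/\kappa_A$ is finite. Choose monic $p_i(T)\in\kappa_A[T]$ with $p_i(\bar b_i)=0$ and lift each to a monic $P_i(T)\in A[T]$; then $P_i(b_i)\in m_B$ is nilpotent, so for a suitable $N_i$ the monic polynomial $P_i(T)^{N_i}\in A[T]$ annihilates $b_i$. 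Each $b_i$ is therefore integral over $A$, and $B$ is a finite $A$-module.

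For (ii), the reverse implications are immediate, so assume $\phi$ is strictly unramified. By definition $\phi$ is locally of finite presentation, so part (i) makes $B$ a finite $A$-module. Unramifiedness gives $m_B=m_AB$ and strictness gives $\kappa_A=\kappa_B$, so $B=\phi(A)+m_B=\phi(A)+m_AB$. Since $A$ is local, its maximal ideal is its Jacobson radical, and Nakayama applied to the finitely generated $A$-module $B/\phi(A)$ gives $B=\phi(A)$; thus $\phi$ is surjective and $B=A/I$ with $I=\ker\phi\subseteq m_A$.

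Assume further that $\phi$ is strictly \'etale, hence also flat. Tensoring $0\to I\to A\to B\to 0$ with the flat $A$-module $B$ yields $I\otimes_AB=0$, equivalently $I=I^2\subseteq m_AI$. To conclude $I=0$ by Nakayama it remains to see that $I$ is finitely generated. Fix a finite presentation $B\cong A[x_1,\dots,x_n]/(f_1,\dots,f_m)$, lift the images of the $x_i$ to elements $\tilde a_i\in A$, and consider the surjection $\alpha\:A[x_1,\dots,x_n]\to A$ given by $x_i\mapsto\tilde a_i$. Both $(f_1,\dots,f_m)$ and $\alpha^{-1}(I)$ coincide with the kernel of the composite $A[x_1,\dots,x_n]\to A\to B$, so $I=\alpha((f_1,\dots,f_m))$ is generated by the elements $f_j(\tilde a_1,\dots,\tilde a_n)$. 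Nakayama then gives $I=0$, and $\phi$ is an isomorphism. The main technical subtlety is that one must genuinely use the finite presentation built into the definition of \'etale to reduce to a finitely generated $I$: the relation $I=I^2$ alone does not force $I=0$ over a non-Noetherian generalized Artin ring.
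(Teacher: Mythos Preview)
Your proof is correct, and for part (i) and the unramified half of (ii) it is essentially the paper's argument verbatim: Zariski's lemma on the residue fields, lift monic relations, raise to a power to kill the nilpotent remainder, then Nakayama.

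The only genuine divergence is in the \'etale case. The paper dispatches it in one line: a flat local homomorphism is faithfully flat, and faithfully flat maps are injective (concretely, $I\otimes_A B=0$ already forces $I=0$, with no finiteness hypothesis on $I$). You instead rewrite $I\otimes_A B=0$ as $I=I^2\subseteq m_AI$, then work to show $I$ is finitely generated from the finite presentation of $B$ so that Nakayama applies. This is a legitimate route, but your concluding remark that ``one must genuinely use the finite presentation built into the definition of \'etale'' overstates the situation: the paper's faithful-flatness argument needs nothing beyond flatness and locality, so the finite-presentation hypothesis is in fact dispensable here. What your detour buys is an explicit finite generating set for $I$, which is not needed for the lemma but could be useful elsewhere; what the paper's route buys is brevity and a cleaner hypothesis.
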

\begin{proof}
Let $k_A$ and $k_B$ denote the residue fields.

(i) Only the inverse implication needs a proof. If $B$ is finitely generated over $A$, then $k_B$ is a finitely generated $k_A$-algebra and hence $k_B/k_A$ is finite by Hilbert Nullstellensatz. Therefore for any $b\in B$ there exists a monic polynomial $f\in A[t]$ such that the image of $f(b)$ in $k_B$ vanishes. Thus, $f(b)$ is nilpotent, and $f^n$ with a large enough $n$ is a monic polynomial which annihilates $b$. So, $A\to B$ is integral, and hence finite.

(ii) Only the direct implications need a proof. If $\phi$ is strictly unramified, then $\phi$ is finite by (i), $m_AB=m_B$ and the homomorphism $k_A=A/m_A\to B/m_AB=k_B$ is surjective. Therefore $\phi$ is surjective by Nakayama's lemma. If, moreover, $\phi$ is strictly \'etale, then it is also flat, and hence injective.
\end{proof}

\subsubsection{Formal completions}
Given a closed point $x\in\gtX$ we provide the local ring $\calO_{\gtX,x}$ with the adic topology such that a {\em finitely generated} ideal $I\subset\calO_{\gtX,x}$ is an ideal of definition if and only if $I$ is $\pi$-adically open (i.e. contains a power of $\pi$) and the radical of $I/\kcirccirc I$ in $\calO_{\gtX_s,x}$ coincides with $m_{\gtX_s,x}$. For example, a finitely generated open ideal $\cI\subset\calO_{\gtX}$ induces an ideal of definition at $x$ if and only if $V_\gtX(I)=x$. It is easy to see that, indeed, all such ideals define the same adic topology and we denote the completion by $\hatcalO_{\gtX,x}$. Note that $\hatcalO_{\gtX,x}\otimes\tilk=\hatcalO_{\gtX_s,x}$.

\begin{theor}\label{etaleth}
Let $f\:\gtY\to\gtX$ be a morphism of admissible formal $\kcirc$-schemes, $y\in\gtY$ and $x=f(y)$. Then $f$ is strictly unramified (resp. strictly \'etale) at $y$ if and only if the induced homomorphism $\phi\:\hatcalO_{\gtX,x}\to\hatcalO_{\gtY,y}$ is surjective (resp. an isomorphism).
\end{theor}
\begin{proof}
If $\phi$ is surjective (resp. an isomorphism), then tensoring with $\tilk$ we obtain that the homomorphism of completed local rings of $\tilk$-varieties $\hatcalO_{\gtX_s,x}\to\hatcalO_{\gtY_s,y}$ is surjective (resp. an isomorphism). By the classical theory of varieties, this implies that the closed fiber $f_s\:\gtY_s\to\gtX_s$ is strictly unramified (resp. \'etale) at $y$, hence $f$ is strictly unramified (resp. \'etale) at $y$ by Lemma~\ref{simpleetale}.

Conversely, assume that $f$ is strictly unramified (resp. \'etale) at $y$. Then $m_{\gtX_s,x}\calO_{\gtY_s,y}=m_{\gtX_s,y}$ and hence for an ideal of definition $J$ of $\calO_{\gtX,x}$ one also has that $J\calO_{\gtY,y}$ is an ideal of definition of $\calO_{\gtY,y}$. Since $J$ is open it contains an ideal $I=(\pi)$ with a uniformizer $\pi$. By definition, $\phi_I\:\calO_{\gtX,x}/I\to\calO_{\gtY,y}/I\calO_{\gtY,y}$ is a strictly unramified (resp. \'etale) homomorphism of rings, hence also the base change $\phi_J\:\calO_{\gtX,x}/J\to\calO_{\gtY,y}/J\calO_{\gtY,y}$ is strictly unramified (resp. \'etale). By Lemma~\ref{artinlem} $\phi_J$ is surjective (resp. an isomorphism), and passing to the limit over the set of ideals of definition we obtain that $\phi$ is also surjective (resp. an isomorphism).
\end{proof}

\subsubsection{Analytic fibers}
We will also use a relation between the completed local rings of $\gtX$ and domains in the generic fiber. For a closed point $x\in\gtX$ let $\gtX_{\eta,x}$ denote the preimage of $x$ under the reduction (or specialization) map $\gtX_\eta\to\gtX$; it is an open subspace of $\gtX_\eta$. In the framework of rigid geometry the spaces $\gtX_{\eta,x}$ were called {\em formal fibers}, and their various properties were established by Bosch in \cite{Bosch-formalfibers}. However, we prefer to change the terminology and call $\gtX_{\eta,x}$ the {\em analytic fiber} over $x$. 

\begin{lem}\label{fiberlem}
Let $\gtf\:\gtY\to\gtX$ be a morphism of admissible formal $\kcirc$-schemes with generic fiber $f=\gtf_\eta\:Y\to X$, and let $y\in\gtY$ be a closed point and $x=\gtf(y)$. Then the analytic fiber $f_y\:Y_y\to X_x$ depends only on the homomorphism $\phi\:\hatcalO_{\gtX,x}\to\hatcalO_{\gtY,y}$ induced by $\gtf$, and if $\phi$ is surjective (resp. bijective), then $f_y$ is a closed immersion (resp. an isomorphism).
\end{lem}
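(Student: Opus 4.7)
The plan is to realize the analytic fiber $X_x$ intrinsically, together with its functorial dependence on morphisms, as an invariant of the completed local ring $\hatcalO_{\gtX,x}$, and then read off both assertions from Theorem~\ref{etaleth}.

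First I would reduce to the affine case $\gtX=\Spf(A)$ and $\gtY=\Spf(B)$, and fix a pseudo-uniformizer $\pi$ together with elements $f_1\.f_r\in A$ such that $I=(\pi,f_1\.f_r)$ is an ideal of definition at $x$, i.e.\ $V_\gtX(I)=\{x\}$ set-theoretically. For each integer $n\ge 1$ consider the Laurent subdomain
$$V_n=\{\xi\in X:|f_i(\xi)|^n\le|\pi|,\ i=1\.r\}\subseteq X,$$
whose affinoid $\kcirc$-algebra $A_n$ is the $\pi$-torsion quotient of the $\pi$-adic completion of $A[T_1\.T_r]/(\pi T_i-f_i^n)$. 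Since the points of $X_x$ are characterized by $|f_i(\xi)|<1$ for every $i$, we have $X_x=\bigcup_n V_n$. On the presenting algebra $A[T_1\.T_r]/(\pi T_i-f_i^n)$ the $\pi$-adic topology coincides with the $I$-adic one (as $f_i^n=\pi T_i$ there), so its $\pi$-adic completion already uses $A$ only through $\hatcalO_{\gtX,x}$. Thus each $V_n$, and hence $X_x$, is canonically built from $\hatcalO_{\gtX,x}$.

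The same construction applied at $y\in\gtY$ to the images of $\pi,f_1\.f_r$ under $\phi$ --- which still generate an ideal of definition at $y$, since $\phi$ is a local homomorphism and these elements have radical $m_{\gtY_s,y}$ in the closed fiber --- yields Laurent domains $W_n\subseteq Y_y$ with compatible ring maps $A_n\to B_n$. Passing to generic fibers exhibits $f_y$ as a functor of $\phi$, proving the first assertion. If $\phi$ is surjective then by Theorem~\ref{etaleth} the map $\gtf$ is strictly unramified at $y$, and each induced map $A_n\to B_n$ becomes surjective modulo $\pi$-torsion; passing to generic fibers, every $W_n\hookrightarrow V_n$ is a closed immersion, so the union $Y_y\hookrightarrow X_x$ is as well. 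When $\phi$ is bijective, the same argument with isomorphisms in place of surjections shows $Y_y=X_x$.

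The main obstacle I anticipate is the verification in the first paragraph that the $\pi$-adic completion of $A[T_i]/(\pi T_i-f_i^n)$, modulo $\pi$-torsion, is insensitive to replacing $A$ by its $I$-adic completion. This reduces to showing that $I$-adic completion on $A$ and $\pi$-adic completion on a finitely presented $A$-algebra whose $\pi$-adic and $I$-adic topologies agree commute up to $\pi$-torsion; the essential input is that $A$ is topologically of finite presentation over $\kcirc$, so standard exactness of completion on finitely presented modules applies.
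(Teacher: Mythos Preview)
Your approach is essentially the paper's: exhaust the analytic fiber by affinoid Weierstrass domains cut out by generators of an ideal of definition, observe that these depend only on the completed local ring, and conclude. The paper lets the pseudo-uniformizer $\pi$ vary (taking $|\pi|=r\to 1$) while you fix $\pi$ and raise the $f_i$ to powers; the two exhaustions are interchangeable.

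There is, however, a genuine slip in your functoriality step. You assert that the images $\phi(f_1)\.\phi(f_r)$ together with $\pi$ still generate an ideal of definition at $y$, justifying this by ``$\phi$ is local''. That is false in general: take $\gtX=\Spf(\kcirc)$, $\gtY=\Spf(\kcirc\{t\})$, $y$ the origin of the closed fiber; then the only generator available is $\pi$, and $(\pi)$ does not cut out $\{y\}$. Consequently your $W_n$ need not lie inside $Y_y$, and the claimed exhaustion $Y_y=\bigcup_n W_n$ can fail. The fix is to build $Y_y$ from $\hatcalO_{\gtY,y}$ using its \emph{own} ideal of definition, and then observe that on each affinoid piece of $Y_y$ the map to $X_x$ is given by the elements $\phi(f_i)$, which depend only on $\phi$; this is what the paper means by ``the construction is easily seen to be functorial''. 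Note that in the surjective case your claim \emph{is} correct (a surjection of complete local rings carries ideals of definition to ideals of definition), so the second assertion survives.

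A smaller point: the appeal to Theorem~\ref{etaleth} in the surjective case is a detour that does no work. Strict unramifiedness of $\gtf$ at $y$ does not by itself give surjectivity of $A_n\to B_n$; what gives it is the direct observation that $B_n$ is manufactured from $\hatcalO_{\gtY,y}$, which is already a quotient of $\hatcalO_{\gtX,x}$. The paper simply says ``if $\phi$ is surjective (resp.\ bijective), then the same is true for $\phi_r$'', and that is the whole argument.
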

\begin{proof}
One can assume that $\gtX=\Spf(A)$ and $\gtY=\Spf(B)$. First, let us show how to reconstruct $X_x$ from $\hatcalO_{\gtX,x}$ (see also the proof of \cite[Lemma~4.4]{bercontr}). Fix any $\uh=(h_1\.h_r)\subset A$ such that $x=V_\gtX(\uh)$ and $0\neq\pi\in\kcirccirc$. In particular, $(\uh,\pi)$ is an ideal of definition of $\hatcalO_{\gtX,x}$ and the latter is the completion $\hatA=\hatA_{(\uh,\pi)}$. Note that $X_x$ is given by the inequalities $|h_i|<1$, hence it is the union of rational domains $X_n=X\{\frac{h_1^n}{\pi}\.\frac{h_r^n}\pi\}$. Note that $X_n$ is the generic fiber of the $\pi$-chart $\gtX_n$ of the admissible blowing up of $\gtX$ along $(h_1^n\.h_r^n,\pi)$. In particular, it possesses the standard description $\gtX_n=\Spf(A_n)$, where $A_n$ is the quotient of $$A\{T_1\.T_r\}/(h_1^n-\pi T_1\.h_r^n-\pi T_r)$$ by the $\pi$-torsion. The key observation now is that $\gtX_n$ is also the chart of the blowing up of $\Spf(\hatcalO_{\gtX,x})$ along the same ideal. Indeed, the homomorphism $A\to A_n$ factors through $\hatA=A\llbracket S_1\.S_r\rrbracket/(h_1-S_1\.h_r-S_r)$ and it follows easily that $A_n$ is also the quotient of $$\hatA\{T_1\.T_r\}/(h_1^n-\pi T_1\.h_r^n-\pi T_r)$$ by the $\pi$-torsion. This provides a description of $X_x=\cup_n X_n$ in terms of $\hatA$ and $\uh,\pi$ only. Moreover, a posteriori we know that $X_x$ (but not $X_n$'s) is independent of the choice of $\uh$ and $\pi$, so it is determined by $\hatA$ only.

In the same way one reconstructs $f_y$ from the homomorphism $\phi\:\hatA\to\hatB=\hatcalO_{\gtY,y}$. Choose $\uh,\pi$ as above and let $\ug=(g_1\.g_s)\subset B$ be any family containing $\phi(\uh)$ and such that $V_\gtY(\ug)=y$. Then $Y_y=\cup_nY_n$, where $Y_n=Y\{\frac{g_1^n}{\pi}\.\frac{g_s^n}\pi\}$ is the generic fiber of $\gtY_n=\Spf(B_n)$ with $B_n$ the quotient of $$\hatB\{T_1\.T_s\}/(g_1^n-\pi T_1\.g_s^n-\pi T_s)$$ by the $\pi$-torsion, and $\gtf$ induces morphisms $\gtf_n\:\gtX_n\to\gtY_n$ whose generic fibers $X_n\to Y_n$ after passing to the union give rise to the morphism $X_x\to Y_y$. In particular, if $\phi$ is an isomorphism, then $f_y$ is.

Finally, if $\phi$ is surjective, then we simply take $\ug=\phi(\uh)$ and the induced homomorphisms $A_n\to B_n$ are easily seen to be surjective. Therefore, $\gtf_n$ are closed immersions, their generic fibers $X_n\to Y_n$ are closed immersions, and hence also $X_x\to Y_y$ is a closed immersion.
\end{proof}

As a side remark we discuss a more subtle property, which will not be used.

\begin{rem}
A converse of Lemma \ref{fiberlem} holds for $\eta$-normal formal schemes, i.e. in the case when $\gtX$ is covered by open affines $\Spf(A)$ such that $A=(A\otimes_{\kcirc}k)^\circ$. Indeed, by \cite[Theorem 5.8]{Bosch-formalfibers} the formal completion can be reconstructed from $\gtX_{\eta,x}$ as $\hatcalO_{\gtX,x}=\calO^\circ_{\gtX_\eta}(\gtX_{\eta,x})$. Clearly, this construction is functorial, that is, $\phi$ is reconstructed from $f_y$, and if $f_y$ is a closed immersion (resp. an isomorphism), then $\phi$ is surjective (resp. bijective).
\end{rem}

\subsubsection{Fiber criterion of log-smoothness}
A classical criterion of log smoothness at a point $x$ of a log variety $X$ over a perfect field $k$ is that the log stratum through $x$ is regular of codimension equal to the rank of $\oM_x$. Furthermore, this is the condition used to define log regularity in general. The following result is its direct analogue for admissible formal schemes. We will only use it when the model is $\gtT_{\pi,m,n,l}$, but the proof in the almost general case is the same. For a monoid $Q$ by $Q_+=Q\setminus Q^\times$ we denote its maximal ideal.

\begin{theor}\label{logsmth}
Let $\gtX$ be an admissible formal $\kcirc$-scheme, $P\subseteq Q$ fine sharp monoids with a torsion free $Q^\gp/P^\gp$ and $\pi\:P\into\kcirc$, $u\:Q\into\Gamma(\calO_\gtX)$ compatible homomorphisms. Assume that the closed subscheme $Z=V_{\gtX_s}(u^{Q_+})$ is $\tilk$-smooth at a point $x$ and $$\codim_x(Z,\gtX_s)\ge\rk(Q^\gp/P^\gp).$$ Then the induced morphism $\phi\:\gtX\to\gtT=\Spf(\kcirc_{P}\{Q\})$ is smooth at $x$, the above inequality is an equality and $u^Q\cap\kcirc=\pi^P$.
\end{theor}
\begin{proof}
The last two claims are clear. Indeed, choose $R\subseteq Q$ such that $u^R=u^Q\cap\kcirc$, then $$\codim_x(Z,\gtX_s)\le\rk(Q^\gp/R^\gp)\le\rk(Q^\gp/P^\gp),$$ and hence all inequalities are equalities. Since $Q^\gp/P^\gp$ is torsion free this also implies that $P=R$. It remains to prove the main assertion that $\phi$ is smooth at $x$.

We will use two simple facts about $\gtT$: 1) $\gtT_s$ is equidimensional of dimension $r:=\rk(Q^\gp/P^\gp)$, 2) the analytic fiber $\gtT_{\eta,O}$ over the origin $O=V_{\gtT_s}(Q_+)$ is reduced and irreducible. For example, both claims can be obtained by appropriate base changes from the fact that the morphism $\Spec(\ZZ[Q])\to\Spec(\ZZ[P])$ has equidimensional fibers of dimension $r$ and its fibers over $\Spec(\ZZ[P^\gp])$ are geometrically integral because $Q^\gp/P^\gp$ is torsion free.

Let $l/k$ be the completed maximal unramified extension. All assumptions of the theorem are satisfied for $\gtX_l=\gtX\wtimes_{\kcirc}\lcirc$ and a preimage $x_l$ of $x$. By Lemma~\ref{simpleetale} smoothness of $\phi$ at $x$ is equivalent to smoothness of the closed fiber $\phi_s$ at $x$. By flat decent $\phi_s$ is smooth at $x$ if and only if $\phi_s\otimes_\tilk\tilk^s=(\phi\wtimes_{\kcirc}\lcirc)_s$ is smooth at $x_l$. Therefore it suffices to prove the theorem for $\gtX_l$ and $x_l$, and replacing $l$ by $k$ for shortness, we can assume in the sequel that $\tilk$ is separably closed.

The assertion of the theorem is local at $x$. Furthermore, let $y$ be any closed point belonging to the closure of $x$ and such that $Z$ is $\tilk$-smooth at $y$ and $\codim_x(Z,\gtX_s)=\codim_y(Z,\gtX_s)$. Then $y$ satisfies all assumptions of the theorem, and it suffices to prove that $\phi$ is smooth at $y$. Thus, replacing $x$ by $y$ we can assume in the sequel that $x$ is a closed point.

Set $d=\dim(\calO_{Z,x})$ and choose elements $\tilt_1\.\tilt_d\in\calO_{Z,x}$ such that the morphism $\tilt\:Z_0\to\bfA^d_\tilk$ they induce on a neighborhood $Z_0\subseteq Z$ of $x$ is \'etale at $x$. Shrinking $\gtX$ around $x$ we can assume that these elements are images of global functions $t_1\.t_d\in\Gamma(\calO_\gtX)$, and hence a morphism $\psi=(\phi,t)\:\gtX\to\gtU=\gtT\times\bfA_{\kcirc}^d$ arises. We claim that $\psi$ is \'etale at $x$. Since $\phi$ is obtained by composing $\psi$ with the projection onto $\gtT$, this will immediately imply the assertion of the theorem.

We start with studying the closed $\psi_s$. Note that $u:=\psi(x)=(O,u_0)$, where $O=V_{\gtT_s}(u^{Q_+})$ is the origin of $\gtT_s$ and $u_0=\tilt(x)$. In addition, $k(O)=\tilk$ and since $k(u_0)$ is finite over the separably closed field $\tilk$ and $k(x)/k(u_0)$ is separable, we also have that $k(x)=k(u_0)=k(u)$. We claim that $\psi_s$ is strictly unramified at $x$, that is, $x=\Spec(k(x))$ is an isolated component of the fiber of $\psi_s\:\gtX_s\to\gtU_s$ over $u$. To describe this fiber we can replace $\psi_s$ by its base change with respect to the closed immersion $O\times\bfA_{\tilk}^d\into\gtU_s$, but the latter base change coincides with $\tilt$, so it is strictly \'etale at $x$ and the fiber is as claimed.

Since $\psi_s$ is strictly unramified at $x$, Lemma~\ref{artinlem} implies that the corresponding homomorphism of noetherian complete local rings $h\:\hatcalO_{\gtU_s,u}\to\hatcalO_{\gtX_s,x}$ is surjective. To complete the proof it suffices to show that $h$ is also injective, because then $\gtX_s\to\gtU_s$ is \'etale at $x$ by the classical theory and we can conclude by Lemma~\ref{simpleetale}. However, the involved rings are not domains, hence the dimension considerations alone are insufficient. Instead of this we will now lift this homomorphism to a homomorphism of domains.

By Theorem~\ref{etaleth} the surjectivity of $h$ implies that the homomorphism $h'\:\hatcalO_{\gtX,x}\to\hatcalO_{\gtU,u}$ is also surjective and hence the associated morphism of analytic fibers $\gtX_{\eta,x}\to\gtU_{\eta,u}$ is a closed immersion by Lemma~\ref{fiberlem}. The source and the target are $k$-analytic spaces of dimension $d+r$ and the target is reduced and irreducible because it is the product of reduced and irreducible spaces $\gtT_{\eta,O}$ and $(\bfA^d_{\kcirc})_{\eta,u_0}$. Therefore this closed immersion is an isomorphism, and hence does not factor through the vanishing locus of any non-zero element of $\hatcalO_{\gtX,x}$. Thus, $\Ker(h')=0$ and we obtain that $h'$ is an isomorphism. Therefore its closed fiber $h$ is an isomorphism too, concluding the proof.
\end{proof}

\subsubsection{Reduction to the case of an algebraically closed ground field}
Using the above theorem we can now descend log smoothness from $\gtX_{(\whka)^\circ}$. This result will not be used so the reader can skip it. The proof is nearly the same as in the semistable case.

\begin{theor}\label{algcloslem}
Let $\gtX$ be an admissible formal $\kcirc$-scheme, $K=\whka$ and $\gtX_{\Kcirc}=\gtX\wtimes_{\kcirc}\Kcirc$. Assume that $f_{\Kcirc}\:\gtX'_{\Kcirc}\to\gtX_{\Kcirc}$ is an admissible formal blowing up such that $\gtX'_{\Kcirc}$ is Zariski log smooth at a point $x_K$. Then there exists a finite separable extension $l/k$ and an admissible formal blowing up $f_{\lcirc}\:\gtX'_{\lcirc}\to\gtX_{\lcirc}$ such that $f_{\Kcirc}$ is the pullback of $f_{\lcirc}$ and $\gtX'_{\lcirc}$ is Zariski log smooth at the image $x_l\in\gtX'_{\lcirc}$ of $x_K$.
\end{theor}
\begin{proof}
Fix a smooth morphism $g_{\Kcirc}:\gtU_{\Kcirc}\to\Spf(\Kcirc_P\{Q\})$ from a small enough affine neighborhood $\gtU_{\Kcirc}=\Spf(A_{\Kcirc})$ of $x_K$ in $\gtX'_{\Kcirc}$ and let $\phi_{\Kcirc}\:Q\to\calO_{\gtU_{\Kcirc}}$ be the associated monoidal chart. The same argument as in the proof of Corollary~\ref{semistabledescent} shows that for a large enough $l$ the blowing up $f_{\Kcirc}$ is obtained from a blowing up $f_{\lcirc}\:\gtX'_{\lcirc}\to\gtX_{\lcirc}$ and $\gtU_{\Kcirc}$ is the preimeage of a neighborhood $\gtU_{\lcirc}=\Spf(A_{\lcirc})$ of $x_l$. We claim that enlarging $l$ if needed one can find a monoidal chart $\phi'_{\lcirc}\:Q\to\calO_{\gtU_{\lcirc}}$ such that for any $q\in Q$ we have that $\phi_{\Kcirc}(q)=u_q\phi'_{\lcirc}(q)$ in $\calO_{\gtU_{\Kcirc}}$ with an invertible $u_q$. This will finish the proof because then $\phi'_{\lcirc}$ induces a chart $g'_{\lcirc}\:\gtU_{\lcirc}\to\Spf(\lcirc_P\{Q\})$ whose base change $g'_{\Kcirc}\:\gtU_{\Kcirc}\to\Spf(\Kcirc_P\{Q\})$ is smooth at $x_K$ by Theorem~\ref{logsmth}, since $g'_{\Kcirc}$ and $g_{\Kcirc}$ have the same fiber over the origin.

To construct $\phi'_{\lcirc}$ choose $q_1\.q_d\in Q$ whose images form a basis of $Q^\gp/P^\gp$. Each $q_i$ divides some element of $P$, hence they all divide some $p\in P$ and then each $u^{q_i}$ divides $\pi^p$ in $\Kcirc_P\{Q\}$. The same argument as in the proof of Corollary~\ref{semistabledescent} shows that enlarging $l$ we can find units $u_i\in 1+\Kcirccirc A_{\Kcirc}$ such that $a_i=u_i\phi(q_i)\in A_{\lcirc}$. We claim that there exists a unique $P$-homomorphism $\phi'\:Q\to A_{\lcirc}$ such that $\phi'(q_i)=a_i$. Each $q\in Q$ can be presented as $q=p+\sum_{i=1}^d l_iq_i$ with $l_i\in\bbZ$ and $p\in P^\gp\subset k^\times$, so using that $\phi(q)=\pi^p\prod_{i=1}^d\phi(q_i)^{l_i}$, we obtain that the rule $\phi'(q)=\phi(q)\prod_{i=1}^d u_i^{l_i}\in A_{\Kcirc}$ defines a $P$-homomorphism $\phi'\:Q\to A_{\Kcirc}$. It remains to show that the image is actually in $A_{\lcirc}$. Choose a representation $q=q'-q''$, where $$q'=p'+\sum_{i=1}^dl'_iq_i,\  q''=p''+\sum_{i=1}^dl''_iq_i,\  p,p'\in P,\ l'_i,l''_i\in\bbN.$$ Then $\phi'(q')=\pi^{p'}\prod_{i=1}^da_i^{l'_i}\in A_{\lcirc}$, in the same way $\phi(q'')\in A_{\lcirc}$, and we have that $\phi'(q)=\phi'(q')/\phi'(q'')$ in $A_{\Kcirc}$. Then the same argument as in Corollary~\ref{semistabledescent} shows that a quotient exists already in $A_{\lcirc}$ and hence $\phi'(q)\in A_{\lcirc}$.
\end{proof}

As an immediate corollary we obtain that it suffices to verify the log smooth modification conjectures in the case, when the ground field $k$ is algebraically closed. We already observed the similar fact for the local uniformization, but include it in the following formulation too:

\begin{cor}\label{algcloscor}
If Conjectures \ref{locunifconj} and \ref{logsmoothconj} hold true when the ground field $k$ is algebraically closed, then they hold in general.
\end{cor}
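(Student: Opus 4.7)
The two conjectures will be handled separately, with Conjecture~\ref{locunifconj} being essentially a direct invocation of Lemma~\ref{unifdescent} and Conjecture~\ref{logsmoothconj} requiring a patching argument on top of Theorem~\ref{algcloslem}.

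For Conjecture~\ref{locunifconj}, the plan is straightforward: given a rig-smooth $k$-analytic space $X$ and a point $x\in X^\ad$, note that $X_K$ remains rig-smooth for $K=\whka$, so the hypothesis applies and the preimage $y\in X^\ad_K$ of $x$ is uniformizable. Lemma~\ref{unifdescent} then transfers this back to uniformizability of $x$ itself. No further work is required.

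For Conjecture~\ref{logsmoothconj}, the plan is to descend, point by point, and then patch. Apply the hypothesis over $K=\whka$ to $\gtX_{\Kcirc}$; since $K$ is already algebraically closed, the conjecture produces an admissible blowing up $f_K\:\gtX'_{\Kcirc}\to\gtX_{\Kcirc}$ with $\gtX'_{\Kcirc}$ polystable, in particular Zariski log smooth. Using quasi-compactness of $\gtX'_{\Kcirc}$, cover it by finitely many open affines $\gtU^{(i)}_{\Kcirc}$ on which smooth monoidal charts $g^{(i)}_{\Kcirc}\:\gtU^{(i)}_{\Kcirc}\to\Spf(\Kcirc_{P_i}\{Q_i\})$ are defined. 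Applying (the proof of) Theorem~\ref{algcloslem} at a point of each $\gtU^{(i)}_{\Kcirc}$ descends $f_K$ together with the chart $g^{(i)}_{\Kcirc}$ to some finite separable extension $l^{(i)}/k$. Let $l$ be the compositum of $l^{(1)},\dots,l^{(n)}$; it is again finite separable over $k$. Over $\lcirc$ the individual descents glue (by uniqueness of descent after base change to $\Kcirc$) into a single admissible blowing up $\gtX'_{\lcirc}\to\gtX_{\lcirc}$, and the descended open affines $\gtU^{(i)}_{\lcirc}\subseteq\gtX'_{\lcirc}$ carry smooth monoidal charts to model formal $\lcirc$-schemes. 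These opens cover $\gtX'_{\lcirc}$ because their base change to $\Kcirc$ covers $\gtX'_{\Kcirc}$ and $\Spec(\Kcirc)\to\Spec(\lcirc)$ is surjective. Hence $\gtX'_{\lcirc}$ is Zariski log smooth, and by the remark following Conjecture~\ref{logsmoothconj} (which cites \cite{ALPT}) a further log blowing up refines it to a polystable admissible model, as required.

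The main technical obstacle is the patching step: Theorem~\ref{algcloslem} is pointwise, whereas we need log smoothness of $\gtX'_{\lcirc}$ everywhere, as well as a single finite separable extension $l/k$ that works uniformly. Both issues are resolved by quasi-compactness, which reduces the job to finitely many local descents, and by the fact that the compositum of finitely many finite separable extensions is again finite separable. A minor but necessary verification is that the opens $\gtU^{(i)}_{\lcirc}$ really cover $\gtX'_{\lcirc}$, which follows from faithful flatness of $\lcirc\to\Kcirc$.
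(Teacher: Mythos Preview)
Your treatment of Conjecture~\ref{locunifconj} via Lemma~\ref{unifdescent} is exactly what the paper intends; it says as much in the sentence preceding the corollary. For Conjecture~\ref{logsmoothconj} the paper gives no proof at all---it simply declares the statement an ``immediate corollary'' of Theorem~\ref{algcloslem}---so your sketch is already more detailed than what the paper offers, and the overall strategy (descend the blowing up, descend finitely many charts, take a compositum, invoke \cite{ALPT}) is the right one.

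There is, however, one genuine gap in your write-up. Theorem~\ref{algcloslem} is pointwise: it only asserts that the descended chart $g'_{\lcirc}$ is smooth \emph{at the image $x_l$} of the chosen point, not on all of $\gtU^{(i)}_{\lcirc}$. You apply it at a single point of each $\gtU^{(i)}_{\Kcirc}$ and then assert without comment that ``the descended open affines carry smooth monoidal charts''. This is in fact true, but it needs a short argument extracted from the proof of Theorem~\ref{algcloslem}: the perturbing units $u_q$ constructed there lie in $1+\Kcirccirc A_{\Kcirc}$ and hence reduce to $1$ on the closed fiber, so $(g'_{\Kcirc})_s=(g_{\Kcirc})_s$ as morphisms of $\tilK$-schemes. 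Since $g_{\Kcirc}$ was assumed smooth on all of $\gtU^{(i)}_{\Kcirc}$, Lemma~\ref{simpleetale} gives that $g'_{\Kcirc}$ is smooth everywhere on $\gtU^{(i)}_{\Kcirc}$; smoothness then descends along the faithfully flat extension $\till\hookrightarrow\tilK$ (again via Lemma~\ref{simpleetale}) to give smoothness of $g'_{\lcirc}$ on all of $\gtU^{(i)}_{\lcirc}$. With this one sentence added, your argument is complete. (A secondary point: the blowing up $f_K$ descends in one shot to a single $l$ by the first paragraph of the proof of Theorem~\ref{algcloslem}, so there is nothing to ``glue''; only the charts require separate descents and a compositum.)
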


\subsection{Reduction to analytic points}\label{mainsec}

\subsubsection{Full family of parameters}
Assume that $x$ is a strictly semistable point of an admissible formal $\kcirc$-scheme $\gtX$. By a {\em (twisted) family of parameters} $(\ut,\us)$ at $x$ we mean (twisted) semistable parameters $\ut=(t_0\.t_m)$ at $x$ and a tuple of elements $\us=(s_1\.s_n)\in\cO_{\gtX,x}$ such that if $V=V_{\gtX_s}(t_0\.t_m)$, then the image of $\us$ in the regular ring $\cO_{V,x}$ is a family of regular parameters. If $\us$ is only a partial family of regular parameters, then we say that $(\ut,\us)$ is a (twisted) partial family.



\subsubsection{Key lemma}
The following analogue of Lemma~\ref{keylem} will be our key lemma in the study of analytic local uniformization. Our argument is a modification of the proof of that lemma, though it gets more complicated when $\gtX$ is not smooth but only semistable at $x$, as some log geometry is naturally involved. In particular, we will use the log smooth models $\gtT$ introduced in \S\ref{twoexam}.

\begin{lem}\label{formalkey}
Assume that $\tilk$ is perfect, $\gtX$ is an admissible formal $\kcirc$-scheme and $Y\into\gtX_s$ an integral closed subscheme with generic point $\eta$. Assume that $x\in Y$ is a point and $E\subset Y$ a divisor such that $(Y,E)$ is a regular pair at $x$ and $\gtX$ is strictly semistable at any point of $Y\setminus E$. Finally, assume that there exist elements $\ut=(t_0\.t_m)$, $\us=(s_1\.s_n)$ of $\cO_{\gtX,x}$ such that their images form a partial twisted family of parameters at any point $y\in Y\setminus E$ and a full family of parameters at $\eta$. Then there exists an admissible blowing up $f\:\gtX'\to\gtX$ such that the strict transform $g\:Y'\to Y$ is an isomorphism over $x$ and there exist a neighborhood $\gtU\subseteq\gtX'$ of $x'=g^{-1}(x)$ and a smooth morphism $\psi\:\gtU\to\gtT_{\pi,m,r,l}$ for $r=\dim(\calO_{Y,x})$, a pseudo-uniformizer $\pi$ and some $l>0$.
\end{lem}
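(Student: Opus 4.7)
The plan is to adapt the proof of Lemma~\ref{keylem}(ii) to the formal setting, using Theorem~\ref{logsmth} (equivalently, the characterization of semistable charts in Lemma~\ref{semistlem}) to check smoothness of the resulting chart to $\gtT_{\pi,m,r,l}$. Working locally at $x$, set $A = \cO_{\gtX,x}$, let $I\subset A$ be the ideal of $Y$, and put $B = A/I$, $C = A/(\ut)$. Since the conclusion is local at $x$ I may freely enlarge $E$ so that it has exactly $r = \dim\cO_{Y,x}$ snc components at $x$ (this only shrinks $Y\setminus E$, so the semistability hypothesis is preserved), and then choose regular parameters $v_1,\dots,v_r$ of $\cO_{Y,x}$ with $b := v_1\cdots v_r$ defining $E$ locally.

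The first step is to verify that $\Spec(B_b)\hookrightarrow\Spec(C)$ is an open immersion, so that Lemma~\ref{annlem} applies. This is where the partial twisted semistable hypothesis on $Y\setminus E$ enters: at any such point $y$, the stratum $V := V_{\gtX_s}(\ut)$ is smooth, hence integral, of codimension $m$ at $y$; as $V$ contains $\eta$, its unique component through $y$ equals the Zariski closure of $\eta$, namely $Y$, and so $Y = V$ locally on $Y\setminus E$. Lemma~\ref{annlem} then provides some $l>0$ and a lift $c\in C$ of $b^l$ with $IC = \Ann_C(c)$; pick any lift $a\in A$ of $c$ and let $f\colon\gtX'\to\gtX$ be the admissible formal blowing up of $(\ut,a)$ (this is an open ideal because $u_0\pi = t_0\cdots t_m\in(\ut)$). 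Working in the $a$-chart $\gtX'_a$ with $t'_i := t_i/a$, the formal version of Lemma~\ref{strictlem}(i)(ii) shows that the strict transform $Y'$ sits in $\gtX'_a$, coincides with $V(\ut')$, and maps isomorphically to $Y$ over $x$; let $x'\in Y'$ denote the unique preimage of $x$.

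To produce the chart, lift $v_1,\dots,v_r$ to $\cO_{\gtX',x'}$ (keeping the names). Both $a$ and the monomial $a_0 := v_1^l\cdots v_r^l$ lift $b^l\in B = \cO_{Y',x'}$ to $\cO_{\gtX',x'}$, so Lemma~\ref{strictlem}(iii) gives a unit $w\in\cO_{\gtX',x'}^\times$ on an open $\gtU\ni x'$ with $a = w a_0$. Combining this with the twisted-semistable relation $t_0\cdots t_m = u_0\pi$ and the chart identity $t_0\cdots t_m = a^{m+1}t'_0\cdots t'_m$ gives
\[
t''_0\cdot t'_1\cdots t'_m\cdot v_1^d\cdots v_r^d = \pi, \qquad d := (m+1)l,
\]
where $t''_0 := u_0^{-1}w^{m+1}t'_0$. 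This is exactly the defining equation of $\gtT_{\pi,m,r,l}$ from Lemma~\ref{toriclem}, so the tuple $(t''_0,t'_1,\dots,t'_m,v_1,\dots,v_r)$ determines a morphism $\psi\colon\gtU\to\gtT_{\pi,m,r,l}$.

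Smoothness of $\psi$ at $x'$ is then read off from Theorem~\ref{logsmth} applied to $P=\NN$, $Q=\NN^{m+r+1}$: since $t''_0$ differs from $t'_0$ by a unit, the subscheme $Z = V_{\gtU_s}(u^{Q_+})$ equals $V_{Y'}(v_1,\dots,v_r) = \{x'\}$, which is a reduced point and hence $\tilk$-smooth because $\tilk$ is perfect. The codimension condition $\codim_{x'}(Z,\gtU_s)\ge \rk(Q^\gp/P^\gp) = m+r$ reduces to verifying that $(\ut',v_1,\dots,v_r)$ is a regular system of parameters of $\cO_{\gtX',x'}$, which is the direct formal analogue of the dimension estimate at the end of Lemma~\ref{keylem}'s proof (the listed $m+r+1$ elements generate $m_{x'}$, and their count matches $\dim\cO_{\gtX',\eta'}+\dim\cO_{Y',x'}$). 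The main technical point I expect to grind through is precisely this matching with the model $\gtT_{\pi,m,r,l}$: it is what forces the preliminary enlargement of $E$ so that $b$ involves all $r$ parameters of $\cO_{Y,x}$, and the exponent $d=(m+1)l$ arises inevitably from the $(m+1)$-fold product $t_0\cdots t_m$ combined with the exponent $l$ provided by Lemma~\ref{annlem}.
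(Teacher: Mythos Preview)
Your argument has a genuine gap at the very first step: the choice $C=A/(\ut)$ is wrong, and the open-immersion claim $\Spec(B_b)\hookrightarrow\Spec(C)$ fails for it. Two things go wrong. First, at $\eta$ the tuple $(\ut,\us)$ is a \emph{full} family of parameters, so $\eta$ has codimension $n$ inside $V_{\gtX_s}(\ut)$; when $n>0$ the closure $Y$ of $\eta$ is a proper subvariety of $V_{\gtX_s}(\ut)$, not a component, so your sentence ``its unique component through $y$ equals $Y$'' is false. Second, $\Spec(A/(\ut))$ is not $V_{\gtX_s}(\ut)$ but $V_{\gtX}(\ut)$: killing $\ut$ only kills the single pseudo-uniformizer $\pi$ (via $t_0\cdots t_m=u\pi$), not all of $\kcirccirc$, so when $\kcirc$ is not a DVR the scheme $\Spec(C)$ is strictly larger than $V_{\gtX_s}(\ut)$ even set-theoretically. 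The paper fixes both problems by taking $C=A/(\kcirccirc,\ut,\us)$ and correspondingly blowing up along $(\ut,\us,a^2)$ rather than $(\ut,a)$; once $\us$ is included, $Z=\Spec(C)$ genuinely has $\eta$ as a generic point and Lemma~\ref{annlem} applies.

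There is a second, independent gap in your use of Lemma~\ref{strictlem}(iii). That lemma does not say that any two lifts of the same element of $B$ differ by a unit; it says that two lifts \emph{from $C$ to $A$} become unit multiples \emph{in the chart $A'$ of the blow-up of $A$ along $I+(a)$}. You only know $a$ and $a_0=v_1^l\cdots v_r^l$ agree in $B$, not in your $C$, and you are not performing a further blow-up of $\gtX'$ to land in the setting of the lemma. This is exactly why the paper uses $a^2$: the $a^2$-chart of the $(\ut,\us,a^2)$-blow-up is the $a$-chart of a further blow-up of the intermediate $(\ut,\us,a)$-chart, and in that intermediate chart the strict transform of $Z$ is already cut out by $(\ut'',\us'')$ with quotient $B$, so Lemma~\ref{strictlem}(iii) legitimately compares $a$ and $v^l$ there. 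Without this two-step trick (or an equivalent device) you cannot conclude $a=w\cdot v_1^l\cdots v_r^l$ with $w$ a unit near $x'$, and the chart to $\gtT_{\pi,m,r,l}$ does not get built.
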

\begin{proof}
Enlarging $E$ we can assume that the number of its irreducible components at $x$ equals $r$. Recall that admissible blowings up can be extended from an open formal subscheme by \cite[Lemma~2.6(a)]{BL}. Therefore the assertion of the lemma is local at $x$, and shrinking $\gtX$ we can assume that $\gtX=\Spf(A)$ and $Y=\Spec(B)$ are affine, $\ut,\us\subset A$, the pair $(Y,E)$ is regular, and there exist elements $\uv=(v_1\.v_r)$ in $A$ with $v=v_1\dots v_r$ and images $\uw=(w_1\.w_r)$ and $w=w_1\dots w_r$ in $B$ such that $E=V_Y(w)$. In particular, the image of $\uv$ in $\cO_{Y,x}$ is a family of regular parameters and $\gtX$ is semistable away from $V_\gtX(v)$.

Set $C=A/(\kcirccirc,\ut,\us)$ and $Z=\Spec(C)$. Then $B=C/J$ for a prime ideal $J$ and $Y\setminus E\into Z$ is an open immersion because it is of codimension 0 and $Z$ is integral (even regular) at any point of $Y\setminus E$. Therefore we obtain by Lemma~\ref{annlem} that $J=\Ann(c)$, where $c$ is a lift of $w^\ell$ to $C$ and $\ell$ is large enough. Let $a\in A$ be a lift of $c$ and let $f\:\gtX'\to\gtX$ be the blowing up along $(\ut,\us,a^2)$. We will see that $f$ is as required.

The $a^2$-chart $\gtX'_{a^2}=\Spf(A')$ is the $\pi$-adic completion of the analogous chart of the blowing up of $\Spec(A)$, so $A'$ is the completion of $A[\ut',\us']\subseteq A_a$, where $t'_i=t_i/a^2$ and $s'_j=s_j/a^2$. We will use the notation $A'=A\{\ut',\us'\}$ to denote that $A'$ is topologically generated by $\ut',\us'$. Applying claims (i) and (iii) of Lemma~\ref{strictlem} to the blowing up of $\Spec(A)$ along $(\ut,\us,a^2)$ and passing to the formal completion (which does not modify the closed fiber) we obtain the same conclusions for $\gtX'$: the strict transform $Z'\into\gtX'$ of $Z$ is isomorphic to $Y$, contained in $\gtX'_{a^2}$ and given by vanishing of the ideal $(\kcirccirc,\ut',\us')$.

In addition, we claim that $a=uv^\ell$ for an element $u\in A'$ which is a unit in a neighborhood of $x'$. Consider the blowing up $\gtX''\to\gtX$ along $(\ut,\us,a)$ with the $a$-chart $\gtX''_a=\Spf(A'')$, where $A''=A\{\ut'',\us''\}$. By the same application of Lemma~\ref{strictlem} as above, the strict transform of $Z$ is isomorphic to $Y$ and given by the vanishing of $(\ut'',\us'')$. Since the images of $a$ and $v^\ell$ in $A''/(\ut'',\us'')=B$ coincide and $\gtX'_a$ is the $a$-chart of the blowing up of $\gtX''_a$ along $(\ut'',\us'',a)$, Lemma~\ref{strictlem}(ii) implies that, indeed, $a=uv^{\ell}$ in $A'$ and $u$ is invertible along $Z'$.

Since $t_i=a^2t'_i$, we have that $a^dt'_0\dots t'_m=t_0\dots t_m=\pi$, where $d=2l(m+1)$. Setting $y_0=u^dt'_0$ and $y_i=t'_i$ for $1\le i\le m$, we achieve that $y_0\dots y_m v^d=\pi$. Set $Q=\NN^{m+r+1}$ and let $\lam\:P=\NN\to Q$ be the homomorphism defined by $$\lam(1)=e_0+\dots+e_m+d(e_{m+1}+\dots+e_r).$$ Then the exponential homomorphism $u\:Q=\NN^{m+r+1}\to A$ sending the basis elements to $y_0\.y_m,v_1\.v_r$ and the homomorphism $\pi:P\into\kcirc$ induced by the pseudo-uniformizer $\pi$ are compatible, and hence give rise to a morphism $\psi\:\gtU:=\gtX'_{a^2}\to\gtS_{P}\{Q\}=\gtT_{\pi,m,r,l}$ (see Lemma~\ref{toriclem}). We will complete the proof by proving that $\psi$ is smooth at $x'$. Furthermore, in view of Theorem~\ref{logsmth} it suffices to prove the following two claims: (i) the closed subscheme $W'=V_{\gtX'_s}(u^{Q_+})$ is $\tilk$-smooth at $x'$, (ii) $\codim_{x'}(W',\gtX'_s)=\rk(Q^\gp/P^\gp)=m+r$.

The closed subscheme $T'=V_{\gtX'_s}(u^{Q_+},\us)=V_{\gtX'_s}(\ut',\us',\uv)$ coincides with $V_{Z'}(\uv)$. Since the image of $\uv$ is a regular family of parameters of $\cO_{Z',x'}$ and $\tilk$ is perfect, we obtain that $x'$ is a generic and $\tilk$-smooth point of $T'$ and $\codim_{x'}(T',Z')=r$. Since $\codim_{x'}(Z',\gtX'_s)=m+n$, we obtain that $\codim(T',\gtX'_s)=m+n+r$. Since $W'=V_{\gtX'_s}(\ut',\uv)$ is of codimension at most $m+r$ at $x'$ and $T'=V_{W'}(\us')$ is of codimension at most $n$ in $W'$, both estimates are equalities, yielding (ii). Since $T'=V_{W'}(\us')$ is $\tilk$-smooth at $x'$ and of codimension $n$ in $W'$, we automatically obtain that $W'$ is $\tilk$-smooth at $x'$, as claimed by (i).
\end{proof}

\subsubsection{The main theorem}
Finally, we are in a position to prove our main result about local uniformization of analytic spaces. It reduces proving the analytic local uniformization conjecture to the case of Berkovich points and the local log uniformization of valuations on $\tilk$-varieties. The proof is somewhat similar to the proof of its algebraic analogue -- Theorem~\ref{unifschemes}.

\begin{theor}\label{mainth}
Assume that $X$ is a smooth strictly $k$-analytic space of dimension $d$ such that any point of $X$ is uniformizable. Assume also that the local log uniformization conjecture holds for $\tilk^a$-varieties of dimension at most $d$. Then any point of $X^\ad$ is uniformizable.
\end{theor}
\begin{proof}
First we note that any point of $X_{\whka}$ is also uniformizable, hence the assumptions are also satisfied for $X_{\whka}$. Moreover, proving the theorem for the latter will imply the theorem for $X$ by Lemma~\ref{unifdescent}. Thus, in the sequel we can assume that $k$ is algebraically closed. Fix a formal model $\gtX$ of $X$ and let $z\in X^\ad$ be a point. We will construct a blowing up of $\gtX$ which uniformizes $z$ by composing a sequence of blowings up which gradually improve the situation at the specialization of $z$. For simplicity of notation, we will replace $\gtX$ with the intermediate blowings up and only record the properties achieved so far. Each next blowing up will be denoted $\gtX'\to\gtX$.

We will use the following notation: $y\in X$ is the generization of $z$ of height 1, $\eta\in\gtX$ is the specialization of $y$, $Y\into\gtX_s$ is the Zariski closure of $\eta$ and $\lam$ is the valuation on $k(Y)$ induced by $z$. In particular, the center $x\in Y\subset\gtX_s$ of $\lam$ is the specialization of $z$.

Step 1. {\it One can achieve that $\eta$ is a strictly semistable point.} Indeed, by assumptions of the theorem blowing up $\gtX$ one can achieve that $y$ specializes to a strictly semistable point $\eta$.

In the sequel we will only use admissible blowings up which induce isomorphisms over $\eta$, so the situation in its neighborhood will stay unchanged and we denote its preimages in any $\gtX'$ by the same letter. Our next goal is to choose parameters at $\eta$ which extend to $x$. Naturally, we will start with any set of parameters and use blowings up to extend them.

Step 2. {\it In addition to condition of step 1 one can achieve that there exist elements $t_0\.t_{m}\in\calO_{\gtX,x}$ whose images in $\calO_{\gtX,\eta}$ are twisted semistable parameters at $\eta$.} By Lemma~\ref{semistlem} there exists a smooth morphism $\gtU\to\gtS_{\pi,m}$ from a neighborhood of $\eta$ which sends $\eta$ to the origin. Let $t'_0\.t'_m\in\Gamma(\calO_\gtU)$ be the corresponding semistable parameters. Each $t'_i$ induces an open invertible ideal $\calI_{\gtU,i}$ on $\gtU$ and we choose any its extension to a finitely generated open ideal $\calI_i$ on $\gtX$ by the argument from the proof of \cite[Lemma~2.6(a)]{BL} (in fact, one reduces the problem to the case of qcqs schemes established in \cite[Th\'eor\`eme~6.9.7]{egaI}). Replacing $\gtX$ by the blowing up along $\prod_{i=0}^m\calI_i$ we do not change the situation above $\eta$ and achieve that each $\calI_i$ is invertible, hence we can choose a generator $t_i\in\calI_i\calO_{\gtX,x}$ at $x$. Since $t_i$ and $t'_i$ define the same ideal at $\eta$, the elements $t_0\.t_m$ are as required. Note that at this stage we cannot get rid of the twist because even though $t_0\dots t_m=u\pi$ and $u$ is invertible at $\eta$, it does not have to be invertible at $x$.

Step 3. {\it In addition to conditions of steps 1 and 2 one can achieve that there exists a tuple $\us=(s_1\.s_n)\subset\calO_{\gtX,x}$ such that the image of $(\ut,\us)$ form a twisted family of parameters at $\eta$.} Choose $\us'=(s'_1\.s'_n)$ such that $(\ut,\us')$ is a twisted family of parameters at $\eta$. Set $W=V_{\gtX_s}(\ut)$, then the images $(\os'_1\.\os'_n)\subset\cO_{W,\eta}$ form a regular family of parameters. To extend them to $x$ we use the trick from step 2: choose ideals $\cI_1\.\cI_n\subset\calO_{W}$ such that $\cI_i=(\os'_i)$ locally at $\eta$ and let $g'\:W''\to W$ be the blowing up along $\cI=\cI_1\dots\cI_n$. Note that $g'$ makes the pullbacks of each $\cI_i$ invertible. In addition, $V_W(\cI)$ is a Cartier divisor at $\eta$, hence $g'$ is an isomorphism over $\eta$.

It will be crucial in the sequel to refine $g'$ by another blowing up so that $\eta$ is not contained in the center. Fortunately, this is possible by a strong Chow lemma asserting that blowings up with centers at a closed subset $V\subset W$ are cofinal among all modifications of $W$ which are isomorphism outside of $V$. In particular, there exists an ideal $\cJ\subset\cO_W$ trivial at $\eta$ and such that the blowing up $g\:W'=\Bl_\cJ(W)\to W$ factors through $W''$. We still have that $g$ is trivial over $\eta$ and principalizes each $\cI_i$. In particular, if $x'$ is the center of $\lam$ on the closure $Y'$ of $\eta$ in $W'$, then choosing generators $\os_i$ of $\cI_i\calO_{W',x'}$ we obtain that $\os_i=u_i\os'_i$ in $\calO_{W',\eta}=\cO_{W,\eta}$ where $u_i$ are units, and hence $\os_1\.\os_n$ form a family of regular parameters of $\cO_{W,\eta}$.

Now let us lift this to $\gtX$. Find any extension of $\cJ$ to an open finitely generated ideal on $\gtX$ which is trivial at $\eta$ and let $f\:\gtX'\to\gtX$ be the blowing up of this ideal. Then $W'\to W$ is the strict transform of $f$, hence $W'$ is a closed subscheme of $\gtX'_s$ and clearly $x'$ is the specialization of $z$ on $\gtX'$. Thus we can take any lift $s_i\in\calO_{\gtX',x'}$ of $\os_i\in\calO_{W',x'}$ and it remains to replace $\gtX$ by $\gtX'$. The condition of step 2 is satisfied because we can just pull back $t_0\.t_m$ to $\cO_{\gtX',x'}$.

Step 4. {\it In addition to conditions of steps 1,2,3 one can achieve that there exists a divisor $E\subset Y$ such that (a) $(Y,E)$ is a regular pair at $x$, (b) for any point $w\in Y\setminus E$ one has that $\gtX$ is strictly semistable at $w$ and $(\ut,\us)$ form a twisted partial family of parameters at $w$.} The second condition is satisfied at $\eta$, hence we can choose a large enough divisor $E\subset Y$ such that this condition is also satisfied at any point of $Y\setminus E$. In particular, $Y$ is regular at any point of $Y\setminus E$. It remains to achieve that also a) is satisfied without destroying b).

Since $\dim(Y)\le\dim(\gtX_s)\le d$ the valuation $\lam$ is log uniformizable by assumptions of the theorem. Therefore there exists a blowing up $g\:Y'=\Bl_V(Y)\to Y$ such that the pair $(Y',E'=g^{-1}(V\cup E))$ is regular at the center $x'\in Y'$ of $\lam$. Choose any lift of the ideal $\cI_V\subseteq\cO_Y$ to an open finitely generated ideal of $\gtX$ and let $g\:\gtX'\to\gtX$ be the corresponding blowing up. Then $Y'\to Y$ is the strict transform of $g$, condition (a) is satisfied because $(Y',E')$ is a regular pair, and condition (b) is satisfied because for any point $w\in Y'\setminus E'$ the morphism $g$ is a local isomorphism at $u$ and maps it to $Y\setminus E$. Thus, replacing $\gtX$, $E$ and $\ut,\us$ by $\gtX'$, $E'$ and the pullbacks of these tuples to $\cO_{\gtX',x'}$ we accomplish the step.

Step 5. {\it End of proof.} The conditions achieved in steps 1,2,3,4 allow us to use the formal key lemma \ref{formalkey}, thereby obtaining a blowing up $\gtX'\to\gtX$ such that the strict transform $g\:Y'\to Y$ is an isomorphism over $x$ and the preimage $x'=g^{-1}(x)$ has a neighborhood $\gtU$ which admits a smooth morphism to a log smooth scheme $\gtT=\gtT_{\pi,m,r,l}$. Note that $x'$ is the center of $\lam$ and hence also the specialization of $y$. Finally, by Lemma~\ref{modiflem} there exists an admissible (even monomial) blowing up $\gtT'\to\gtT$ with a semistable source. Pulling it back we obtain an admissible blowing up $h\:\gtU'\to\gtU$ such that $\gtU'$ is smooth over $\gtT'$ and hence semistable. Clearly, $y$ specializes to a point of $\gtU'$, and it remains to arbitrarily extend $h$ to an admissible blowing up $\gtX'\to\gtX$.
\end{proof}

\bibliographystyle{amsalpha}
\bibliography{induction_on_height}

\end{document}